\theoremstyle{definition} 
\newtheorem{theorem}{Theorem} 
\newtheorem{lemma}[theorem]{Lemma}
\newtheorem{conjecture}[theorem]{Conjecture}     
\newtheorem{example}[theorem]{Example}        
\newtheorem{remark}[theorem]{Remark}
\newtheorem{question}[theorem]{Question}
\newcommand*\samethanks[1][\value{footnote}]{\footnotemark[#1]} 
\title{Embedding dimension gaps in sparse codes}
\author{R. Amzi Jeffs\thanks{Department of Mathematical Sciences, Carnegie Mellon University, Pittsburgh, PA 15213, USA} \thanks{Supported by the National Science Foundation through Award No. 2103206.} \and Henry Siegel\samethanks[1] \and David Staudinger\samethanks[1] \and Yiqing Wang\samethanks[1]}
\date{September 2023}
\DeclareMathOperator{\conv}{conv} 
\DeclareMathOperator{\code}{code} 
\DeclareMathOperator{\odim}{odim} 
\DeclareMathOperator{\cdim}{cdim} 
\DeclareMathOperator{\CF}{CF} 
\newcommand{\R}{\mathbb{R}} 
\newcommand{\U}{\mathcal{U}} 
\newcommand{\V}{\mathcal{V}} 
\newcommand{\FP}{\mathcal{FP}} 
\newcommand{\C}{\mathcal{C}}
\newcommand{\od}{\stackrel{\text{def}}{=}} 
\newcommand{\mxl}[1]{\mathbf{#1}} 
\begin{document}

\maketitle
\begin{abstract}
       We study the open and closed embedding dimensions of a convex 3-sparse code $\FP$, which records the intersection pattern of lines in the Fano plane. We show that the closed embedding dimension of $\FP$ is three, and the open embedding dimension is between four and six, providing the first example of a 3-sparse code with closed embedding dimension three and differing open and closed embedding dimensions. 
       We also investigate codes whose canonical form is quadratic, i.e. ``degree two" codes. 
       We show that such codes are realizable by axis-parallel boxes, generalizing a recent result of Zhou on inductively pierced codes. 

       We pose several open questions regarding sparse and low-degree codes.
       In particular, we conjecture that the open embedding dimension of certain 3-sparse codes derived from Steiner triple systems grows to infinity.

\end{abstract}
\section{Introduction}

A \emph{(combinatorial) code} is any set system $\C\subseteq 2^{[n]}$. 
Elements of a code are called \emph{codewords}.
We typically abbreviate codewords by listing out their elements, e.g. $\{1,2,3\}$ is expressed more concisely as $123$. 
We also typically express inclusion-maximal codewords in boldface.

Given a collection $\U = \{U_1,\ldots, U_n\}$ of sets in $\R^d$, we can use a code to record how the sets intersect and cover one another:
\[
\code(\U) \od \big\{\sigma\subseteq [n]\  \big |\  \text{there exists $p\in \R^d$ such that $p\in U_i$ if and only if $i\in\sigma$}\big\}.
\]
In other words, we label every point $p\in \R^d$ according to which $U_i$ contain it, then collect all such labels to form $\code(\U)$.
The collection $\U$ is said to \emph{realize} a code $\C$ when $\code(\U)=\C$.
We are particularly interested in studying codes that are \emph{convex}, meaning that they can be realized by a collection of convex subsets of Euclidean space. 
For example, the code $\C = \{\mxl{124},\mxl{13}, \mxl{234}, 12, 23, 24, 1,2,3,4, \emptyset\}$ is convex and has a realization in $\R^2$, as shown in \Cref{fig:first-example}.

\begin{figure}[h]
\[
\includegraphics{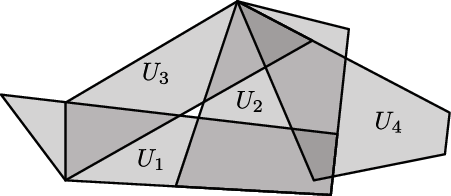}
\]
\caption{A planar convex realization of $\C = \{\mxl{124},\mxl{13}, \mxl{234}, 12, 23, 24, 1,2,3,4, \emptyset\}$. }\label{fig:first-example}
\end{figure}

Convex codes were introduced by Curto, Itskov, Veliz-Cuba, and Youngs~\cite{CIVY} to mathematically model hippocampal place cells. 
In this applied context, we are interested in \emph{open convex} codes---meaning that sets in a realization should be both convex and open---since the regions observed in experimental work are full-dimensional.
One can analogously define \emph{closed convex} codes, and perhaps surprisingly these two classes of codes differ.
Lienkaemper, Shiu, and Woodstock~\cite{LSW} described a code that is closed convex but not open convex, and Cruz, Giusti, Itskov, and Kronholm~\cite{CGIK} gave an example with the opposite behavior.
On the other hand, Franke and Muthiah~\cite{franke-muthiah} showed that every code be realized by convex sets in a large enough dimension when no topological requirements are placed on the sets.
The disparity between open and closed realizations motivated the introduction and study of \emph{open and closed embedding dimensions} of a code $\C\subseteq 2^{[n]}$, defined respectively as \begin{align*}
\odim(\C) &\od \min \{d \mid \text{$\C$ has an open convex realization in $\R^d$}\}, \text{\,\,\, and}\\
\cdim(\C) &\od \min \{d \mid \text{$\C$ has a closed convex realization in $\R^d$}\}.
\end{align*}
Above, the minimum over the empty set is defined to be $\infty$. 

Recent work of Jeffs~\cite{embedding-vectors} shows that there can be arbitrary differences between the open and closed embedding dimensions of a code.
In particular, for any $2\le a,b\le \infty$, there exists a code $\C$ with $\odim(\C) = a$ and $\cdim(\C) = b$. 
We are interested in whether or not such behavior remains present when we restrict to ``simple" codes. 
This paper investigates two distinct notions of being ``simple"---codes with low sparsity, and codes with low degree. 
Both notions are introduced below.

\paragraph{Sparse codes and $\FP$.}
We say that $\C\subseteq 2^{[n]}$ is \emph{$k$-sparse} if every codeword in $\C$ has cardinality at most $k$.
For example, the code realized in \Cref{fig:first-example} is 3-sparse. 
Jeffs, Omar, Suaysom, Wachtel, and Youngs~\cite{sparse} investigated 2-sparse codes, in particular showing that if $\C$ is open or closed convex and 2-sparse, then $\cdim(\C) = \odim(\C)\le 3$.
On the other hand, there are 3-sparse convex codes with $\odim(\C) \neq \cdim(\C)$, the first example being a code $\mathcal S_3$ described in work of Jeffs~\cite{embedding-phenomena} but implicit in earlier work of Lienkaemper, Shiu, and Woodstock~\cite{LSW}.

We are interested in the open and closed embedding dimensions of the 3-sparse \emph{Fano plane code}, \[
\FP \od \{\mxl{123},\mxl{145},\mxl{167},\mxl{246},\mxl{257},\mxl{347},\mxl{356}, 1,2,3,4,5,6,7,\emptyset\}.
\]
This is precisely the code obtained by recording the intersection pattern of lines in the Fano plane. 
Alternatively, this code arises from the unique Steiner triple system on seven indices by adding all singleton codewords and the empty codeword.
See \Cref{sec:conclusion} for discussion on this latter perspective.

For now, observe that $\FP$ is \emph{intersection complete}, meaning that the intersection of any two codewords is again a codeword.
This fact, together with the observations that $\FP$ is 3-sparse and has seven maximal codewords, and results of Cruz, Giusti, Itskov, and Kronholm~\cite{CGIK} and Jeffs~\cite{embedding-phenomena} imply that $\cdim(\FP) \le 5$ and $\odim(\FP)\le 6$.
In fact, we can refine these bounds significantly.

\begin{restatable}{theorem}{fpthm}\label{thm:fp}
The open and closed embedding dimensions of $\FP$ satisfy\[
\cdim(\FP) = 3\quad\quad\text{and}\quad\quad 4\le\odim(\FP)\le 6. 
\]
\end{restatable}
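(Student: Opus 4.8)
The bound $\odim(\FP)\le 6$ is already recorded above, so it remains to establish $\cdim(\FP)\le 3$, $\cdim(\FP)\ge 3$, and $\odim(\FP)\ge 4$. I would begin with the upper bound. To build a closed convex realization in $\R^3$, index seven points $p_{123},p_{145},\dots,p_{356}\in\R^3$ by the seven lines of the Fano plane, chosen in sufficiently general position (points on the moment curve should work), and set $U_i\od\conv\{p_L \mid i\in L\}$ for each $i\in[7]$; since each point of the Fano plane lies on exactly three lines, $U_i$ is a (possibly degenerate) triangle. The verification that $\code(U_1,\dots,U_7)=\FP$ splits into checking that $p_L\in U_i$ iff $i\in L$ (the ``only if'' direction being general position), that the seven triangles meet pairwise only in their shared vertex $p_{L}$, where $L$ is the unique line through the two relevant indices (a codimension-$\ge 1$ genericity condition, which forces every non-collinear triple of indices to have empty common intersection and every collinear triple $\{i,j,k\}$ to have common intersection exactly $\{p_{ijk}\}$), and that relative interiors of the triangles and a point far from all of them supply the singleton and empty codewords. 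The only real work here is making ``general position'' precise and confirming that it holds on the moment curve.

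For $\cdim(\FP)\ge 3$, the case of $\R^1$ is immediate from Helly's theorem in dimension one: every pair of indices lies in a codeword, so the seven sets would be pairwise intersecting intervals, hence would share a common point, forcing the forbidden codeword $1234567$. For $\R^2$ I would first extract the structure forced on any closed convex realization $\{U_1,\dots,U_7\}$: the only codeword of $\FP$ containing a given pair $\{i,j\}$ is the unique Fano line $L\ni i,j$, so every point of $U_i\cap U_j$ lies in $U_k$ (the third point of $L$) and in no other $U_m$; therefore $U_i\cap U_j=U_i\cap U_j\cap U_k=:C_L$ depends only on $L$, the seven convex sets $C_L$ are nonempty and pairwise disjoint, and $C_L\subseteq U_i$ iff $i\in L$. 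Choosing $q_L\in C_L$ yields seven points of which each $U_i$ contains exactly the three with $i\in L$ (and must exclude the other four). An order-type analysis of these seven points --- organized by which of them are vertices of their common convex hull, and using the angular order of the remaining points around each vertex together with the disjointness and nesting of the $C_L$ --- produces a contradiction, in the same spirit as the classical non-realizability of the Fano plane by straight lines. I expect the bookkeeping in this case analysis to be the main obstacle in the planar case.

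Finally, $\odim(\FP)\ge 4$ is the crux. Suppose $\FP$ had an open convex realization in $\R^3$. The structural conclusions of the previous paragraph apply verbatim --- $U_i\cap U_j=C_{L}$, the $C_L$ pairwise disjoint, $C_L\subseteq U_i$ iff $i\in L$ --- but now each $C_L$, being an intersection of open sets, is full-dimensional. Thus each $U_i$ is an open convex set in $\R^3$ containing three pairwise-disjoint full-dimensional open convex ``chambers'' $C_L$ with $L\ni i$, disjoint from the remaining four chambers, and the seven $U_i$ realize the Fano incidence pattern on the seven chambers. I would derive a contradiction from this configuration by a dimension count that genuinely uses full-dimensionality: passing to small balls inside the chambers and either applying a Radon-partition/separation argument to the enlarged configuration, or extracting from the separating hyperplanes of the pairwise-disjoint chambers a family of seven hyperplanes or lines realizing a projective Fano-type configuration that classical results forbid; the machinery of embedding vectors from~\cite{embedding-vectors} is a natural alternative tool here. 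This last step is the principal difficulty: the closed realization in $\R^3$ constructed above shows that no purely combinatorial or order-type obstruction can rule out dimension three, so the argument must exploit openness --- equivalently, the full-dimensionality of the chambers $C_L$ --- in an essential way.
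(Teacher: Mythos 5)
Your overall decomposition is the right one --- establish $\cdim\le 3$, $\cdim\ge 3$, $\odim\ge 4$, and quote $\odim\le 6$ --- and your structural observations about any realization (that $U_i\cap U_j=U_i\cap U_j\cap U_k=C_L$ for the unique line $L$ through $i,j$, that the $C_L$ are nonempty and pairwise disjoint, and that $C_L\subseteq U_i$ iff $i\in L$) are correct and useful. But all three nontrivial sub-claims are left as sketches rather than proofs, and in two of the three the sketch points toward an argument that either fails or omits the key idea.

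For $\cdim\le 3$, the moment-curve construction in $\R^3$ is not verified and is in fact doubtful. Two triangles sharing one vertex span planes whose intersection in $\R^3$ is generically a \emph{line} through that vertex, so the triangles may well share a segment; genericity alone does not force them to meet only at the common vertex. (This problem disappears one dimension up, which is why the paper proposes the moment-curve construction only in $\R^4$.) The paper instead gives an explicit non-generic configuration --- four facets of an octahedron together with the three axis segments through opposite vertices --- whose intersection pattern can be checked directly.

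For $\cdim\ge 3$, ``an order-type analysis in the spirit of non-realizability of Fano by straight lines'' is not an argument; the classical non-stretchability of the Fano configuration does not transfer to convex sets without real work. The paper's proof picks, among all choices of the seven points $p_{ijk}$, one minimizing the area of $\conv(P)$, and then splits into cases on whether $\conv(P)$ is a triangle, repeatedly showing that a crossing point of two diagonals could have replaced one of the chosen points and shrunk the area. That minimality device, together with heavy use of the double transitivity of the Fano symmetry group, is what makes the case analysis finite and tractable; you would need something analogous for your sketch to close.

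The crux, $\odim\ge 4$, is where the gap is largest. You list several candidate mechanisms (separating hyperplanes, a projective Fano configuration, embedding vectors) but commit to none, and none of them is developed to the point where one can see a contradiction. The tool you never mention is the one that actually does the work: Lienkaemper--Shiu--Woodstock's sunflower lemma, which says that a line meeting all three petals of a sunflower of convex \emph{open} sets must meet the center of the sunflower. Each maximal codeword of $\FP$ gives such a sunflower. The paper chooses the seven points $p_{ijk}$ in general position in $\R^3$, extracts (by Radon) a 3--2 split among six of them, and shows the Radon point is forced by the sunflower lemma into two different triple intersections simultaneously --- yielding a codeword of size five, a contradiction. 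This is precisely the step where openness is used, and without that lemma (or a replacement with comparable force) the dimension count you gesture at does not produce a contradiction.
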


\noindent \Cref{thm:fp} provides the first example of a 3-sparse code with $\cdim(\C) = 3$ and $\odim(\C) > \cdim(\C)$. 
The general behavior of embedding dimensions for 3-sparse codes remains a wide open question.
In particular, it is not known if there can be a gap of more than one between the open and closed embedding dimensions of 3-sparse codes, nor whether the closed dimension can exceed the open dimension.
Perhaps most crucially, it remains unclear whether or not there is any uniform upper bound on the open or closed embedding dimensions of 3-sparse convex codes.
These questions and potential avenues of progress will be further discussed in \Cref{sec:conclusion}. 

\paragraph{Receptive field relations, the canonical form, and the degree of a code.}
Curto, Itskov, Veliz-Cuba, and Youngs~\cite{CIVY} took an algebraic approach to the study of codes and their realizations, generalizing the theory of Stanley--Reisner rings to arbitrary set systems. 
Their approach allows one to isolate minimal set-theoretic relationships that sets in a realization must satisfy. 
Below, we give a combinatorial and geometric account of their approach, which is equivalent to their algebraic framework. 

Given a code $\C\subseteq 2^{[n]}$ with a (not necessarily open, closed, or convex) realization $\U = \{U_1,\ldots, U_n\}$ in $\R^d$, we say that a pair $(\sigma,\tau)$ with $\sigma,\tau\subseteq [n]$ is a \emph{receptive field relation} or \emph{RF relation} if \[
\bigcap_{i\in\sigma} U_i\, \,\subseteq\,\, \bigcup_{j\in\tau} U_j.
\]
As usual, the empty union is the empty set, and we adopt the convention that $\bigcap_{i\in\emptyset} U_i$ is all of $\R^d$. 
We will only consider codes realizable by bounded sets, and so we never have $(\emptyset, \tau)$ as an RF relation.
Also, we note that the containment above is only interesting when $\sigma$ and $\tau$ are disjoint, and from here on we only work with RF relations where $\sigma\cap \tau = \emptyset$.
The RF relations of $\C$ do not depend on the realization $\U$, since $\C$ fully encodes the intersection and covering information of any of its realizations. 

We say that an RF relation $(\sigma,\tau)$ for $\C$ is \emph{minimal} if $(\sigma\setminus \{i\}, \tau)$ and $(\sigma, \tau\setminus \{j\})$ are not RF relations for any $i\in\sigma$ or $j\in\tau$. 
The \emph{canonical form} of a code $\C$ is \[
\CF(\C) \od \{(\sigma, \tau)\mid (\sigma,\tau) \text{ is a minimal RF relation for $\C$}\}. 
\]
The canonical form exactly captures the minimal set-theoretic (i.e. intersection and covering) relationships between sets in any realization of $\C$, and has been studied extensively from an algebraic perspective~\cite{CIVY, what-makes, grobner, polarization, signatures, canonical22}.
We say that the \emph{degree} of an RF relation $(\sigma,\tau)$ is $|\sigma| + |\tau|$, and the degree of a code $\C$ is the maximum degree of the relations in $\CF(\C)$. 

Curry, Jeffs, Youngs, and Zhao~\cite{CJYZ} showed that ``inductively pierced" codes have degree two, and can be realized not just by convex sets, but by open balls. 
In a recent master's thesis, Zhou~\cite{zhouthesis} showed that inductively pierced codes can also be realized by axis-parallel boxes.
These results suggest a relationship between the degree of a code, and the complexity of its possible geometric realizations: codes with low degree should have simpler realizations.
We add to the evidence of this trend for degree two codes with the following theorem, proved in \Cref{sec:boxes}.

\begin{restatable}{theorem}{degreetwo}\label{thm:degreetwo}
    Let $\C\subseteq 2^{[n]}$ be a degree two code. Then $\C$ can be realized by axis-parallel boxes in dimension $\max\{1, n-1\}$.
\end{restatable}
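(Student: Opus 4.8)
The plan is to proceed by induction on $n$, building a box realization in $\R^{n-1}$ by adding one set at a time while maintaining control over the degree-two relations. The key structural fact I would want to exploit first is a characterization of what degree-two canonical forms look like: every relation in $\CF(\C)$ has the form $(\{i\},\{j\})$ (a ``covering'' relation $U_i\subseteq U_j$, forcing $i$ into every codeword containing... wait, forcing: $i\in\sigma\Rightarrow j\in\sigma$ is wrong; rather $U_i\subseteq U_j$ so every codeword containing $i$ contains $j$), $(\{i,j\},\emptyset)$ (a disjointness relation $U_i\cap U_j=\emptyset$), or $(\{i,j\},\{k\})$ (a ``wedge'' relation $U_i\cap U_j\subseteq U_k$). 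First I would record that these three types, together with the trivial relations $(\sigma,\emptyset)$ for $\sigma$ a non-codeword, are the only possibilities, and translate each into a concrete geometric constraint that a box realization must satisfy. This reduces the theorem to a combinatorial bookkeeping problem about placing axis-parallel boxes.

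\textbf{Inductive setup.} For the inductive step, I would like to remove a well-chosen index $n$, apply the inductive hypothesis to the restricted code $\C|_{[n-1]}$ (which is again degree two, since restriction cannot increase degree — one must check this, but it follows because RF relations of a restriction are induced by RF relations of $\C$), obtain a box realization $\{B_1,\dots,B_{n-1}\}$ in $\R^{n-2}$, and then add one new coordinate direction and a new box $B_n$. The new coordinate gives us a ``free'' dimension in which to separate $B_n$'s behavior from the rest: by taking $B_n$ to be very thin in the new direction and the old boxes to be very long (essentially cylinders) in that direction, intersections among $B_1,\dots,B_{n-1}$ are preserved, and the only new intersection data to arrange is how $B_n$ meets each old box and each old intersection. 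The crux is choosing which index to delete and then realizing the required pattern for $B_n$ using the one new degree of freedom plus fine adjustments in the old coordinates.

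\textbf{Where the difficulty lies.} The main obstacle is the last point: even with a fresh coordinate, $B_n$ must simultaneously satisfy all relations involving $n$ — it must be contained in certain $U_j$ (from $(\{n\},\{j\})$ relations), be disjoint from certain $U_i$, and its intersections with pairs must land inside prescribed $U_k$ — and it must not create spurious codewords or destroy existing ones. The degree-two hypothesis is exactly what makes this tractable: because there are no relations of degree three or higher, the constraints on $B_n$ decouple ``pairwise,'' so I expect one can intersect a generic thin slab (in the new coordinate) with an appropriate box in the old coordinates, where the old-coordinate box is chosen as a small perturbation of a vertex configuration of the existing arrangement. I would argue that a suitably generic choice of $B_n$ — sitting mostly in the new dimension, touching the old arrangement only in a small region chosen to pick up precisely the intersections dictated by $\CF(\C)$ — works, with the verification that no unwanted codewords appear being the part that requires care. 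The base case $n=1$ (or the trivial $n=2$ case, realizable on a line) is immediate, which is why the statement has $\max\{1,n-1\}$.

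\textbf{Alternative.} If the index-deletion approach proves awkward because no index can be cleanly removed, a fallback is to realize $\C$ directly: assign to each $i\in[n]$ its own ``primary'' coordinate and let $B_i$ be governed mainly by an interval in coordinate $i$, then use the remaining coordinates to encode the covering and wedge relations. This ``one coordinate per set, minus one'' scheme matches the dimension count and turns each degree-two relation into a transparent inequality between interval endpoints; the work then shifts to showing the system of inequalities is always feasible, which again should follow from the absence of higher-degree obstructions.
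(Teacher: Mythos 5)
Your high-level strategy matches the paper's: induct on $n$, delete an index, realize $\C\setminus n$ by boxes in $\R^{n-2}$, and extend to $\R^{n-1}$ by adding one coordinate. However, the proposal has two substantive problems.

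\textbf{Misclassification of degree-two relations.} You list $(\{i,j\},\{k\})$ (a ``wedge'' relation $U_i\cap U_j\subseteq U_k$) as a possible canonical relation in a degree-two code. This is wrong: the degree of a relation $(\sigma,\tau)$ is $|\sigma|+|\tau|$, so $(\{i,j\},\{k\})$ has degree three. The only degree-two relations are $(\{i,j\},\emptyset)$ and $(\{i\},\{j\})$. Getting this right is not cosmetic --- the entire construction is organized around these two relation types, and allowing wedge relations would make your ``pairwise decoupling'' claim false and the bookkeeping genuinely harder.

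\textbf{The deleted index must be inclusion-minimal, and no construction is actually given.} You say ``remove a well-chosen index'' without saying which, and later hedge that the approach might be ``awkward because no index can be cleanly removed.'' In fact the choice is essential: the extension step silently relies on $\C\setminus n\subseteq\C$, which can fail if $n$ is not inclusion-minimal (e.g., $\C=\{12,2,\emptyset\}$ has $\C\setminus 2=\{1,\emptyset\}\not\subseteq\C$; here $U_1\subseteq U_2$, so $1$ is the inclusion-minimal index and $2$ is not). The paper proves (using intersection-completeness of degree-two codes) that when $i$ is inclusion-minimal, $\C\setminus i\subseteq\C$, and this is what prevents the lower slice of the extended realization from producing spurious codewords. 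Without this, your ``cylinders plus thin slab'' picture simply doesn't give a realization of $\C$. Moreover, ``a suitably generic choice of $B_n$ works'' is not an argument; the paper's version pins the construction down precisely (old boxes disjoint from $U_n$ live over $[0,1]$, boxes that may meet $U_n$ live over $[0,3]$, $U_n$ is $\bigcap_{j\in\sigma}U_j\times[2,3]$), and the verification that the codewords come out exactly right is the content of the lemma, handled in two explicit cases. That verification --- which you flag as ``the part that requires care'' and then skip --- is precisely where the proof lives.
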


\section{Background and supporting lemmas.}
Before proving our results, we first recall some useful geometric and combinatorial results, and justify several supporting lemmas. 
Throughout, we will use the notation $\overline{pq}$ to denote the line segment between points $p$ and $q$ in $\R^d$.
We start by noting that a line spanned by a vertex of a simplex and one of its interior points must pass through the facet opposite the vertex in question. 
We set this apart as a lemma since we make use of it in several cases, but omit the proof. 

\begin{lemma}\label{pierce}
    Let $P=\{p_1,\ldots,p_{k+1}\}\subseteq \R^d$ with $k\leq d$ such that its points are in general position, and let $q$ be in the relative interior of the $k$-simplex $\conv(P)$. 
    Then the line $L$ which passes through $p_i$ and $q$ intersects the facet $\conv(P\setminus\{p_i\})$. 
\end{lemma}

\paragraph{Radon partitions.}
Given a set $P\subseteq \R^d$ with $|P| = d+2$, Radon's theorem guarantees that there exists a partition $P = P_1\sqcup P_2$ so that $\conv(P_1)\cap \conv(P_2)\neq\emptyset$.
In fact, when $P$ is in general position such a partition is unique, and $\conv(P_1)\cap \conv(P_2)$ consists of a single point, which we call the \emph{Radon point} of $P$. 

We will be interested in various cases based on the sizes of $P_1$ and $P_2$. 
To this end, the Radon partition $P_1\sqcup P_2$ is called an \emph{$n$--$m$ split} when $|P_1|=n$ and $|P_2|=m$ (we will assume $n\ge m$ throughout).
Given enough points in general position, we can always  find a split among a subset of them that is as even as possible. 
The following lemma explains such a situation in $\R^3$. 

\begin{lemma} \label{3--2}
    Let $P=\{p_1,\ldots,p_6\}\subseteq \R^3$ be in general position. Then there exists a 5-element subset of $P$ whose Radon partition is a 3--2 split.
\end{lemma}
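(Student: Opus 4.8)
The natural approach is to case on the number of vertices of $\conv(P)$. Since $P$ is in general position it affinely spans $\R^3$, so $\conv(P)$ is a $3$-polytope with $4$, $5$, or $6$ vertices. If it has five or six vertices, I would choose any five points $Q\subseteq P$ that are all vertices of $\conv(P)$. Each of them is then a vertex of $\conv(Q)$ as well, hence lies outside the convex hull of the remaining four points of $Q$. Because $|Q|=5=3+2$, the (unique, by general position) Radon partition of $Q$ splits it into parts of sizes $\{1,4\}$ or $\{2,3\}$; a $\{1,4\}$ split would place one point of $Q$ inside the convex hull of the other four, which we have just ruled out. So the partition is a $3$--$2$ split and this case is done.

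It remains to treat the case where $\conv(P)$ has exactly four vertices $v_1,v_2,v_3,v_4$, and the other two points $w,w'$ lie in the tetrahedron $T:=\conv\{v_1,v_2,v_3,v_4\}$; general position forces $w,w'\in\operatorname{int}(T)$, since a point on a facet of $T$ would be coplanar with three of the $v_i$. For $i\in\{1,2,3,4\}$, consider the five-point set $S_i:=\{w,w'\}\cup\{v_j:j\ne i\}$, consisting of a facet of $T$ together with the two interior points. I claim at least two of $S_1,S_2,S_3,S_4$ have a $3$--$2$ Radon partition. Suppose instead that $S_i$ has a $4$--$1$ split; by general position its lone point $c_i$ lies in the relative interior of the convex hull of the other four points of $S_i$. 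Those four points all lie in $T$, and a vertex of a polytope never lies in the convex hull of other points of that polytope, so $c_i\notin\{v_j:j\ne i\}$, and therefore $c_i\in\{w,w'\}$.

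The key geometric input is that, since $w'\in\operatorname{int}(T)$, coning $w'$ over the four facets of $T$ subdivides $T$ into the four simplices $C_\ell:=\conv(\{v_j:j\ne\ell\}\cup\{w'\})$, which have pairwise disjoint interiors. If $c_i=w$ then $w$ lies in the relative interior of $\conv(S_i\setminus\{w\})=\conv(\{v_j:j\ne i\}\cup\{w'\})=C_i$; since $C_i$ is full-dimensional this gives $w\in\operatorname{int}(C_i)$. As the interiors of the $C_\ell$ are disjoint, at most one index $i$ can satisfy $c_i=w$, and likewise at most one can satisfy $c_i=w'$. Hence at most two of $S_1,\dots,S_4$ are $4$--$1$ splits, so at least two are $3$--$2$ splits, which proves the claim and hence the lemma.

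I expect the four-vertex case to be the only genuine obstacle—the five- and six-vertex cases are essentially immediate—and within it the load-bearing idea is the cone subdivision of $T$ from an interior point, which forces each of $w$ and $w'$ to be the lone point of at most one of the special subsets $S_i$. Two small supporting facts I would take care to justify are that a $4$--$1$ Radon split of five points in general position in $\R^3$ really does place its singleton in the \emph{interior} (not merely on the boundary) of the tetrahedron spanned by the other four—otherwise four of those points would be coplanar—and that an interior point of a simplex induces the asserted subdivision into cones over the facets with pairwise disjoint interiors.
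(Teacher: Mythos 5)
Your proof is correct, and it takes a genuinely different route from the paper's. The paper anchors its casework on a specific five-point subset $\{p_1,\ldots,p_5\}$: if its Radon partition is already a 3--2 split it is done, and otherwise (a 4--1 split with, say, $p_5$ interior to $\conv\{p_1,\ldots,p_4\}$) it splits into two subcases according to whether $p_6$ lies inside or outside that tetrahedron, in each subcase producing an explicit 3--2 split via a segment-crossing argument (using \Cref{pierce}). You instead organize the casework around the vertex count of $\conv(P)$. Your handling of the $\ge 5$-vertex case is arguably slicker than anything in the paper: five points that are all extreme cannot admit a 4--1 split, full stop, with no Radon-point construction required. Your 4-vertex case uses the same cone subdivision of a tetrahedron from an interior point that the paper uses in its Case~2, but deploys it for a counting argument (each interior point can be the Radon singleton of at most one of the four natural five-subsets $S_i$, so at least two of the $S_i$ have 3--2 splits) rather than for a direct segment-crossing construction. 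The paper's argument is a bit more constructive in that it exhibits the two parts of the 3--2 partition explicitly; your argument is more symmetric and in fact proves a slightly stronger statement (at least two of the four subsets $S_i$ work). Both are sound and of comparable length.
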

\begin{proof}
    Consider the first five points. If these form a 3--2 split, we are done. Otherwise, they form a 4--1 split. Without loss of generality, let $p_5\in \conv(p_1,\ldots,p_4)$.
    We consider two cases.\medskip
    
    \noindent \textbf{Case 1:} $p_6\notin\conv(p_1,\ldots,p_4)$.\smallskip
    
    \noindent The line segment $\overline{p_5p_6}$ intersects a facet of the 3-simplex $\conv(p_1,\ldots,p_4)$. Without loss of generality, let this facet be $\conv(p_1,p_2,p_3)$. Then $\{p_1,p_2,p_3\}\sqcup\{p_5,p_6\}$ is a 3--2 Radon partition.\medskip
    
    \noindent\textbf{Case 2: }$p_6\in\conv(p_1,\ldots,p_4)$. \smallskip
    
    \noindent We may consider $\conv(p_1,\ldots, p_4)$ as the union of four smaller simplices, each with $p_5$ as a vertex, and the other three vertices coming from $\{p_1, p_2, p_3, p_4\}$. 
    The point $p_6$ lies in one of these simplices, say without loss of generality $p_6\in\conv(p_1,p_2,p_3,p_5)$. Then the line segment $\overline{p_4p_6}$ intersects a facet of this 3-simplex, and as in the first case we obtain a 3--2 Radon partition.
\end{proof}

\paragraph{Symmetries in the Fano plane.}
The Fano plane has a great deal of symmetry.
For our purposes, the most important fact is that the symmetry group of the Fano plane is ``doubly transitive." 
In our language, this means that any pair of maximal codewords can be mapped to any other pair of maximal codewords by a symmetry.
We record this in a lemma below, which we will make extensive use of to reduce the casework in our arguments.

\begin{lemma}\label{symmetry}
    Let $m_1,\ldots,m_7$ be the maximal codewords of $\FP$.
    Then for every choice of $(m_i,m_j)$, $(m_k,m_\ell)$ where $i\neq j$, $k\neq\ell$, there exists a permutation $\Pi$ of $[7]$ which maps $\Pi(m_i)=m_k$ and $\Pi(m_j)=m_\ell$, and under which $\FP$ is invariant.
\end{lemma}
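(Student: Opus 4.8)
The plan is to exhibit the symmetry group of the Fano plane concretely enough to see double transitivity on lines, which is what the lemma asserts once we recall that the maximal codewords of $\FP$ are exactly the lines of the Fano plane (and that adding singletons and $\emptyset$ does not change which permutations of $[7]$ preserve the code — a permutation preserves $\FP$ if and only if it preserves the set of seven triples). So the statement reduces to: the automorphism group of the Fano plane $\mathrm{PGL}(3,2) \cong \mathrm{PSL}(2,7)$ acts doubly transitively on the $7$ lines. I would first make this reduction explicit in one sentence, then prove the group-theoretic fact.

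For the group-theoretic core I see two routes. The first is a counting/orbit argument: the full automorphism group $G$ of the Fano plane has order $168$, acts transitively on the $7$ lines (by the obvious symmetry, or because $G$ acts transitively on points and point-line duality gives transitivity on lines), so a line stabilizer has order $24$; it then suffices to show a line stabilizer acts transitively on the remaining $6$ lines. Any two distinct lines meet in exactly one point, so the stabilizer of a line $\ell$ partitions the other six lines according to which of the three points of $\ell$ they pass through — but the stabilizer of $\ell$ still acts transitively on the three points of $\ell$ (a subgroup of $S_3$ of order dividing $24$ that is transitive — here one checks it is all of $S_3$ or at least transitive), and the pointwise structure shows the two lines through a fixed point $p\in\ell$ other than $\ell$ are swapped by an element fixing $\ell$ and $p$. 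Combining, the $6$ lines form a single orbit, giving double transitivity. The second, more hands-on route — and the one I would actually write, since it keeps the paper self-contained and avoids invoking $|\mathrm{PGL}(3,2)|=168$ — is to directly produce the permutations. Fix the labeling so the seven lines are $123, 145, 167, 246, 257, 347, 356$. It suffices to show: (a) for any ordered pair of distinct lines $(m_i, m_j)$ there is an automorphism sending it to a fixed reference ordered pair, say $(123, 145)$; then composing one such map with the inverse of another handles arbitrary $(m_i,m_j)\mapsto(m_k,m_\ell)$.

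To prove (a) I would use the (well-known, and easily checked) fact that the automorphism group acts transitively on ordered triples of non-collinear points, equivalently on ordered "frames." Two distinct lines $m_i, m_j$ meet in a point $x$; pick a point $y \in m_i \setminus\{x\}$ and $z \in m_j\setminus\{x\}$; then $x,y,z$ are non-collinear, and an automorphism sending the frame $(x,y,z)$ to the corresponding frame read off from $(123,145)$ — namely $(1,2,4)$ since $123\cap 145 = \{1\}$ — sends $m_i\mapsto 123$ and $m_j\mapsto 145$. The existence of such frame-transitive automorphisms can be justified by exhibiting generators: the $7$-cycle $i\mapsto i+1 \bmod 7$ (with the quadratic-residue labeling of lines) and one further permutation, e.g. an element of order $3$ or the standard order-$2$ element, which one verifies preserves the seven triples by inspection — this is a short finite check. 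Alternatively one can simply cite that $\mathrm{PGL}(3,\mathbb F_2)$ acts sharply transitively on frames of $\mathbb P^2(\mathbb F_2)$.

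The main obstacle is bookkeeping rather than mathematics: one must be careful that "symmetry of the Fano plane" is interpreted as a permutation of the seven \emph{points} (equivalently indices $[7]$) that carries lines to lines, and then track how such a permutation acts on the maximal codewords. The cleanest writeup verifies frame-transitivity once (either by citing $|\mathrm{PGL}(3,2)| = 168 = 7\cdot 6\cdot 4$ acting on the $168$ ordered frames, or by listing two generators and checking they preserve $\{123,145,167,246,257,347,356\}$), and then deduces double transitivity on lines by the "meet in a point, extend to a frame" argument above. I would also remark that this immediately implies the stronger-sounding claim used later — that the stabilizer of a single maximal codeword acts transitively on the other six — though only the doubly-transitive-on-pairs statement is needed for \Cref{symmetry} as stated.
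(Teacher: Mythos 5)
Your argument is mathematically sound, but the key point for comparison is that the paper does not actually prove this lemma: it is stated without a proof environment, with the preceding paragraph simply recalling that the symmetry group of the Fano plane is ``doubly transitive'' as a classical fact. Your proposal therefore supplies a proof where the paper supplies a citation, which is a legitimate and more self-contained choice. Both routes you sketch work. The orbit-counting route needs one to verify that the order-$24$ line stabilizer fuses the remaining six lines into a single orbit; your outline (the stabilizer surjects onto $S_3$ on the three points of the fixed line, and an elation with that line as axis and a different center swaps the two extra lines through a chosen point) can be completed but is the more delicate of the two. The frame route is cleaner: the count $7\cdot 6\cdot 4 = 168 = |\mathrm{PGL}(3,2)|$, together with the observation that a collineation fixing three non-collinear points of the Fano plane in fact fixes all seven points, gives sharp transitivity on ordered non-collinear triples, and your ``meet in a point, extend to a non-collinear triple'' reduction is exactly right. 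Two small cautions. First, a ``frame'' of a projective plane is standardly an ordered quadrangle (four points, no three collinear), not a non-collinear triple; for $q=2$ both counts happen to equal $168$ and both actions happen to be sharply transitive, but the terminology should not be conflated in a writeup. Second, the $7$-cycle $i\mapsto i+1$ does not preserve the paper's labeling of $\FP$ (for instance $123\mapsto 234\notin\FP$), so the explicit-generator route requires either a relabeling or generators adapted to this labeling; the paper itself later uses the symmetries $(1)(2435)(67)$ and $(124)(365)(7)$, which would serve that purpose.
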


\paragraph{Sunflowers of convex open sets.}
We say that $\U = \{U_1,U_2,\ldots, U_n\}$ is a \emph{sunflower} if \[
\code(\U ) = \{[n], 1, 2,\ldots, n,\emptyset\},\] i.e. if all the sets have a common intersection, and each set only appears alone outside this intersection.
Sunflowers of convex open sets have played an important role in the study of convex codes, in particular serving as building blocks to describe rich families of codes with gaps between open and closed embedding dimensions~\cite{sunflowers, embedding-phenomena}. 
The first implementation of these ideas was given by Lienkaemper, Shiu, and Woodstock~\cite{LSW} who proved the following result. 

\begin{lemma}[Lemma 3.2 of \cite{LSW}]\label{lem:LSW}
Let $\{U_1, U_2, U_3\}$ be a sunflower of convex open sets in $\R^d$.
If a line $L$ intersects each $U_i$, then in fact $L$ intersects $U_1\cap U_2\cap U_3$. 
\end{lemma}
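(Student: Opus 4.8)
The plan is to prove the contrapositive flavored statement directly: assuming $L$ meets each $U_i$ but misses the common intersection $W \od U_1 \cap U_2 \cap U_3$, derive a contradiction. Pick points $q_i \in L \cap U_i$ for $i = 1,2,3$. Since these three points lie on a line, one of them, say $q_2$, lies on the segment between the other two, so $q_2 \in \overline{q_1 q_3}$. The key structural fact about a sunflower is that each $U_i$ appears \emph{alone} outside the common intersection: there is no codeword of the form $ij$ with $i \neq j$. Geometrically this says $U_1 \cap U_2 = U_1 \cap U_3 = U_2 \cap U_3 = W$, i.e. all pairwise intersections already coincide with the triple intersection.

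First I would use convexity of $U_1$ and $U_3$ together with $q_1 \in U_1$, $q_3 \in U_3$: the segment $\overline{q_1 q_3}$ need not lie in either set, so I instead look at where it enters and exits $U_2$. Since $q_2 \in U_2$ and $U_2$ is convex and open, $L \cap U_2$ is an open segment (or ray/line, but boundedness rules that out) containing $q_2$; call its closure $\overline{ab}$ with $a, b$ its endpoints on $L$. Now the point $q_1$ is either inside $U_2$ or on the far side of one endpoint, say $a$, from $q_2$; similarly $q_3$ relative to $b$. If $q_1 \in U_2$ then $q_1 \in U_1 \cap U_2 = W$, contradicting $L \cap W = \emptyset$; likewise for $q_3$. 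So we may assume $q_1$ and $q_3$ lie strictly outside $\overline{ab}$ on opposite sides (they must be on opposite sides since $q_2$ is between them and inside). Then the endpoint $a \in \overline{q_1 q_2} \subseteq \overline{q_1 q_3}$; but $a \in \partial U_2$, and I want to place $a$ inside $U_1$ or $U_3$. Here is where I would be careful: $a$ lies on $\overline{q_1 q_2}$, but $q_2 \notin U_1$ in general, so I cannot immediately conclude $a \in U_1$ by convexity of $U_1$ alone.

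The fix is to iterate or to argue more symmetrically. Consider all three open segments $I_i \od L \cap U_i$. Each is a bounded open interval on $L$. If any two of them, say $I_1$ and $I_2$, overlapped, their intersection would be a point of $U_1 \cap U_2 = W$ on $L$, contradiction; so $I_1, I_2, I_3$ are pairwise disjoint open intervals on the line $L$. But three pairwise disjoint nonempty intervals on a line can be linearly ordered, so one of them, say $I_2$, lies entirely between the other two (its interior separated from $I_1$ and from $I_3$). Pick $q_2 \in I_2$. Then $q_2$ lies on the segment $\overline{q_1 q_3}$ for any $q_1 \in I_1$, $q_3 \in I_3$ — fine — but more importantly, choosing $q_1$ to be near the endpoint of $I_1$ closest to $I_2$ and $q_3$ near the endpoint of $I_3$ closest to $I_2$, the gap between $I_1$ and $I_2$ is a nonempty closed interval $[s,t]$ on $L$ lying outside all three $U_i$. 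This last observation — a nonempty chunk of $L$ covered by none of the $U_i$, sitting between parts covered by $U_1$ and by $U_2$ — should be leveraged against convexity: a point just past the $U_1$-side of this gap is in $U_1$, a point just past the $U_2$-side is in $U_2$, and I would try to use the convexity of $U_1 \cup U_2$'s... but $U_1 \cup U_2$ need not be convex.

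The genuinely clean argument, and the one I would ultimately write, instead invokes a supporting-hyperplane/order argument: after establishing that $I_1, I_2, I_3$ are pairwise disjoint intervals on $L$ with $I_2$ in the middle, fix $q_2 \in I_2$ and let $H$ be a hyperplane through $q_2$ transverse to $L$; the two sides of $H$ contain $I_1$ and $I_3$ respectively. Now $q_2 \notin U_1$, and $U_1$ is convex, so $U_1$ lies in one closed halfspace bounded by some hyperplane through... no — the cleanest is: since $q_2 \in \overline{q_1q_3}$ with $q_1 \in U_1$, and $U_1$ is convex and open with $q_2 \notin U_1$, the whole ray from $q_1$ through $q_2$ beyond $q_2$ misses $U_1$ — that is false too. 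I think the honest statement of the main obstacle is exactly this: naive convexity of individual sets does not close the argument, and the real content of Lemma~\ref{lem:LSW} is a separation argument showing the three disjoint intervals cannot all occur. So the step I expect to be the crux is converting ``$I_1, I_2, I_3$ pairwise disjoint on $L$'' into a contradiction, which I would handle by taking, for the middle interval $I_2$, a hyperplane $H \ni q_2$ strictly separating $U_1$ from $q_2$ (exists since $U_1$ is open convex and $q_2 \notin \overline{U_1}$ — if $q_2 \in \partial U_1$ a limiting argument still works, or one perturbs), noting $q_1$ and $q_3$ then lie strictly on $H$'s far side from... and observing $q_1, q_3$ sit on opposite sides of $q_2$ along $L$ so cannot both be on one side of $H$ unless $H$ contains $L$, forcing $q_3 \notin U_1$ trivially but also, reapplying with roles of $U_3$, pinning down a single consistent linear order that is impossible. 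I would present this order-theoretic incompatibility as the heart of the proof, with the sunflower hypothesis ($U_i \cap U_j = W$ for all $i \neq j$) doing the essential work of making the intervals disjoint, and cite \Cref{pierce}-style elementary convexity only for the routine interval claims.
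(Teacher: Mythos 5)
Your reduction to the case of three pairwise disjoint open intervals $I_1, I_2, I_3 = L\cap U_1, L\cap U_2, L\cap U_3$ with the middle one playing a special role is correct and is a good first step; and you are right that this uses exactly $U_i\cap U_j = W$ for $i\neq j$. But the rest of the argument has a genuine gap, and you half-acknowledge this yourself. The separating-hyperplane idea does not close: choosing $H\ni q_2$ with $U_1$ strictly on one side, you only recover that $q_1$ and $q_3$ lie on opposite sides of $H$, hence that $q_3\notin U_1$ — which you already knew, since $I_1\cap I_3=\emptyset$. Repeating with $U_3$ similarly gives only $q_1\notin U_3$. There is no ``order-theoretic incompatibility'' to be extracted from these two hyperplanes alone: one can write down explicit planar configurations in which both separations exist with no contradiction, because the constraints $a x_0 + b y_0 < 0$ and $a' x_0 + b' y_0 > 0$ (for the two hyperplane normals) are mutually satisfiable.

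The ingredient you never actually use is the \emph{nonemptiness} of the triple intersection $W$. Your argument treats the sunflower hypothesis purely as a statement about pairwise intersections being small; but three pairwise disjoint open intervals on a line realized by three pairwise-disjoint convex open sets is perfectly possible — what is impossible is having those three intervals disjoint \emph{while the three sets also share a common point}. The proof cited from Lienkaemper--Shiu--Woodstock fixes a point $w\in W$ (if $w\in L$ one is done immediately, since then $w$ would lie in all three $I_i$, contradicting disjointness) and works in the 2-plane spanned by $L$ and $w$, exploiting the triangle $\conv\{q_1, q_3, w\}$: the edges $\overline{q_1 w}$ and $\overline{q_3 w}$ lie entirely in $U_1$ and $U_3$ respectively, the segment $\overline{q_2 w}$ lies in $U_2$ and cuts across the triangle, and one derives a forbidden codeword by examining how these segments and the interiors of the $U_i$ must interact near $w$ (where all three overlap) versus near the base $\overline{q_1 q_3}$ (where they are disjoint). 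Without introducing $w$ and this auxiliary triangle, there is no contradiction available, and your proposal as written does not produce one.
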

\noindent The following lemma is an immediate consequence of this result, and will be used extensively in our analysis of the open embedding dimension of $\FP$. 
\begin{lemma}\label{collinear}
    Let $\{U_1,U_2,U_3\}$ be a sunflower of convex open sets in $\R^d$.
    Let $p_1\in U_1,p_2\in U_2,$ and $p_3\in U_3$ be points such that $p_3 \in \overline{p_1p_2}$.
    Then $p_3\in U_1\cap U_2\cap U_3$.
\end{lemma}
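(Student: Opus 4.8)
The plan is to reduce the claim to a one‑dimensional statement along the line through $p_1$ and $p_2$, and then use \Cref{lem:LSW} to pin down where the common intersection meets that line. First I would dispose of the degenerate case $p_1 = p_2$: then $p_3 = p_1 = p_2$ lies in all of $U_1, U_2, U_3$ and there is nothing to prove. So assume $p_1 \neq p_2$, let $L$ be the line through them, and parametrize it affinely by $\gamma(t) = (1-t)p_1 + t p_2$, so that $p_1 = \gamma(0)$, $p_2 = \gamma(1)$, and $p_3 = \gamma(t_0)$ for some $t_0 \in [0,1]$. Write $K := U_1 \cap U_2 \cap U_3$. The key structural input is the sunflower condition: since $\code(\U) = \{123, 1, 2, 3, \emptyset\}$ contains no size‑two codeword, any point lying in two of the $U_i$ lies in the third, so $U_i \cap U_j = K$ for every pair $i \neq j$. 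Pulling back along the affine map $\gamma$, the sets $I_j := \gamma^{-1}(U_j) \subseteq \R$ are open intervals (preimages of convex open sets under an affine map), with $0 \in I_1$, $1 \in I_2$, $t_0 \in I_3$, and $I_i \cap I_j = J := \gamma^{-1}(K)$ for all $i \neq j$.

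Next I would invoke \Cref{lem:LSW}: the line $L$ meets $U_1$ (at $p_1$), $U_2$ (at $p_2$), and $U_3$ (at $p_3$), so $L$ meets $K$, i.e.\ $J \neq \emptyset$. Fix $s \in J$; then $s$ lies in every $I_j$. The proof now finishes with a short case analysis on the position of $s$ relative to $0$, $1$, and $t_0$, using repeatedly that an interval containing two points contains the closed segment between them. If $s \le 0$, then $I_2$ contains $s$ and $1$, hence contains $[s,1] \ni t_0$, so $t_0 \in I_2 \cap I_3 = J$. If $s \ge 1$, symmetrically $I_1 \supseteq [0,s] \ni t_0$, so $t_0 \in I_1 \cap I_3 = J$. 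If $0 < s < 1$, then either $t_0 \le s$, in which case $I_1 \supseteq [0,s] \ni t_0$ gives $t_0 \in I_1 \cap I_3 = J$, or $t_0 \ge s$, in which case $I_2 \supseteq [s,1] \ni t_0$ gives $t_0 \in I_2 \cap I_3 = J$. In every case $t_0 \in J$, which is exactly $p_3 = \gamma(t_0) \in K = U_1 \cap U_2 \cap U_3$.

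The only real content — the step I expect to be the main obstacle — is recognizing that \Cref{lem:LSW} by itself only produces \emph{some} point of $K$ on the line $L$, not $p_3$ itself, so one still has to transport membership to $p_3$. The collapse $U_i \cap U_j = K$ forced by the sunflower condition is what makes this possible: it reduces ``$p_3 \in K$'' to showing $p_3$ lies in just two of the three sets, and a witness point $s \in J$ together with the convexity of the intervals $I_j$ always supplies two such memberships. (An equivalent but slightly more case‑heavy route avoids the parametrization entirely: take $q \in L \cap K$ from \Cref{lem:LSW}, and according to where $q$ sits among the collinear points $p_1, p_2, p_3$, use convexity of whichever $U_j$'s contain both $q$ and a suitable $p_i$ to force $p_3$ into a second set, then apply the sunflower collapse.) I expect the parametrized version above to be the cleanest to write down in full.
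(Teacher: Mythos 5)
Your proof is correct and takes essentially the same route as the paper: use \Cref{lem:LSW} to produce a witness point of $U_1\cap U_2\cap U_3$ on the line through $p_1,p_2$, then transport $p_3$ into two of the three sets by convexity and collapse via the sunflower condition (no size-two codewords). The only real difference is a small technicality you handle more carefully than the paper does: as stated, \Cref{lem:LSW} only guarantees a witness somewhere on the \emph{line} $L$, whereas the paper's proof silently asserts the witness can be taken on the \emph{segment} $\overline{p_1p_2}$; your cases $s\le 0$ and $s\ge 1$ cover the possibility that the witness falls outside the segment. (In fact one can show the witness can always be taken inside the segment, which is why the paper's shortcut is harmless, but your version is self-contained without that extra observation.) Otherwise the parametrization is just a cosmetic reorganization of the same two-case convexity argument.
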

\begin{proof}
    \Cref{lem:LSW} guarantees that $\overline{p_1p_2}$ contains a point $p_{123}$ in $U_1\cap U_2\cap U_3$ 
    We then have $p_3\in \overline{p_1 p_{123}}$ or $p_3\in \overline{p_{123}p_2}$.
    The former case implies $p_3\in U_1$ by convexity of $U_1$, and similarly the latter case implies $p_3\in U_2$.
    Since $\{U_1, U_2, U_3\}$ is a sunflower, both cases in fact imply that $p_3$ lies in $U_1\cap U_2\cap U_3$, as desired. 
\end{proof}

A realization of $\FP$ contains seven different sunflowers, one for each maximal codeword.
In fact, $\FP$ contains many induced copies of the previously mentioned code $\mathcal S_3 = \{\mxl{123},\mxl{14},\mxl{24},\mxl{34}, 1,2,3,4,\emptyset\}$ implicit in Lienkaemper, Shiu, and Woodstock's work~\cite{LSW}. 
One can use \Cref{lem:LSW} to prove that $\cdim(\mathcal S_3) = 2$ and $\odim(\mathcal S_3) = 3$ (dimension-minimal realizations are shown in \Cref{fig:S3}).
The Fano plane code $\FP$ contains 28 different isomorphic copies of $\mathcal S_3$: one may take any of the seven maximal codewords, and add any of the remaining four indices to find such a copy. 
Our \Cref{thm:fp} can be thought of as saying that these 28 copies of $\mathcal S_3$ are ``sufficiently entangled" in $\FP$ that the closed dimension increases by one, and the open dimension increases by at least one.

\begin{figure} [h]
\[
\includegraphics{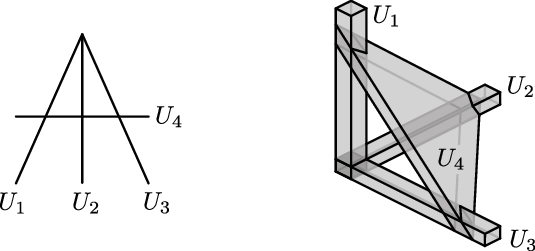} 
\]
\caption{Dimension-minimal closed and open realizations of $\mathcal S_3$ in $\R^2$ and $\R^3$.}\label{fig:S3}
\end{figure}

\section{The closed dimension of $\FP$}
\Cref{fig:fanoR3} shows a closed realization of $\FP$ in $\R^3$.
The sets in this realization are four facets of the octahedron, and three axis-parallel line segments passing through opposite vertices of the octahedron.
Formally, this realization is defined by \begin{align*}
X_1 &=\conv\{e_1, -e_1\} & X_4 &=\conv\{-e_1, -e_2, e_3\}\\
X_2 &= \conv\{e_2, -e_2\}& X_5 &=\conv\{-e_1, e_2, -e_3\} \\
X_3 &=\conv\{e_3, -e_3\} & X_6 &=\conv\{e_1, -e_2, -e_3\}\\
&& X_7 &=\conv\{e_1, e_2, e_3\}. 
\end{align*}

\noindent The remainder of this section is devoted to proving that the realization in \Cref{fig:fanoR3} is dimension-minimal. 
Throughout the proof below, we use the notation $L(p,q)$ to denote the line through points $p$ and $q$.

\begin{figure}[h]
\[
\includegraphics{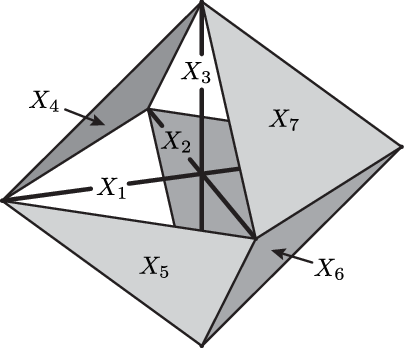}
\]
    \caption{A closed realization of $\FP$ in $\R^3$.}\label{fig:fanoR3}
\end{figure}

\begin{theorem}\label{thm:fanoR2}
    There is no closed convex realization of $\FP$ in the plane.
    Consequently, $\cdim(\FP) = 3$.
\end{theorem}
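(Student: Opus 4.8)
The plan is to argue by contradiction: suppose $\{U_1,\dots,U_7\}$ is a closed convex realization of $\FP$ in $\R^2$, and extract enough rigid structure to force a planar impossibility. The first step is purely combinatorial. Since the only codeword of $\FP$ containing a given pair $\{i,j\}$ is the unique Fano line $\{i,j,k\}$ through $i$ and $j$, every such triple must form a sunflower in the realization: $U_i\cap U_j = U_i\cap U_j\cap U_k =: C_{ijk}$, and moreover $C_{ijk}$ meets none of the remaining four sets (a point in $C_{ijk}\cap U_\ell$ would receive a codeword of size at least $4$). Dually, for every non-collinear triple $\{i,j,k\}$ we must have $U_i\cap U_j\cap U_k=\emptyset$. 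Consequently the seven ``centers'' $C_L$ (one per Fano line $L$) are pairwise disjoint nonempty convex sets, each $C_L$ is disjoint from $U_j$ whenever $j\notin L$, and each $U_v$ is carved up (outside its private region, witnessing the singleton $v$) into the three centers $C_L$ with $v\in L$.

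Next I would fix one line $\ell_0=\{a,b,c\}$; by \Cref{symmetry} (double transitivity of the Fano symmetry group) the choice is immaterial, which keeps the casework bounded. The complement of $\ell_0$ is a four-point ``cap'' $\{d,e,f,g\}$, and the six lines other than $\ell_0$ each meet $\ell_0$ in one point and $\{d,e,f,g\}$ in a pair; the three resulting pairings of $\{d,e,f,g\}$ are indexed by $a,b,c$, so we may label things so that the lines through $a$ are $\ell_0,\{a,d,e\},\{a,f,g\}$, and similarly for $b$ and $c$. Choosing a point $p_L\in C_L$ for each line $L$, the four sets $U_d,U_e,U_f,U_g$ together with the six points $p_L$ ($L\neq\ell_0$) form a \emph{convex complete quadrilateral}: the $p_L$ are the six ``vertices,'' $U_d,U_e,U_f,U_g$ play the role of the four ``lines'' (each contains exactly three of the six points, by the sunflower relations), and the three segments joining opposite vertices lie respectively in $U_a$, $U_b$, $U_c$ — for instance $U_d\cap U_e$ and $U_f\cap U_g$ are both forced inside $U_a$, so $\overline{p_{\{a,d,e\}}\,p_{\{a,f,g\}}}\subseteq U_a$. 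The realization conditions say exactly that these three ``diagonals'' must have a common point (namely $C_{\ell_0}\neq\emptyset$, since $U_a\cap U_b\cap U_c=C_{\ell_0}$), while $U_a\cap U_b=U_a\cap U_c=U_b\cap U_c$ pins that common intersection down to $C_{\ell_0}$, which in turn is disjoint from all of $U_d,U_e,U_f,U_g$.

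The heart of the argument is to show this configuration cannot exist in $\R^2$: it is a convex-geometry incarnation of Fano's axiom, which is valid over $\R$ — the diagonal points of a complete quadrilateral cannot be concurrent — and this is precisely why the Fano plane fails to be realizable over the reals. Concretely, I would feed the six points $p_L$ and the relevant pairwise-intersection regions into Radon's theorem in the plane (together with repeated applications of \Cref{pierce}) to produce, on the one hand, the forced common point of the three diagonals, and on the other hand a point lying either in four of the seven sets or in three sets with non-collinear indices — contradicting that $\FP$ is $3$-sparse with the Fano incidence pattern. Since a realization in $\R^1$ is clearly impossible, this gives $\cdim(\FP)\ge 3$, and with the explicit realization of \Cref{fig:fanoR3} we conclude $\cdim(\FP)=3$. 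The main obstacle is this final step: converting the projective statement ``the three diagonals are not concurrent'' into a claim about genuine convex sets (which are not lines) and handling the degenerate cases — collinear $p_L$'s, or centers and other regions of dimension less than two — without an explosion of subcases; here the double transitivity supplied by \Cref{symmetry} is what makes the bookkeeping manageable.
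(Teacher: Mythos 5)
Your combinatorial setup is sound: the sunflower structure of each triple, the disjointness of the centers $C_L$, and the ``complete quadrilateral'' picture around a fixed line $\ell_0$ are all correctly derived from the definition of $\FP$, and the use of \Cref{symmetry} to fix $\ell_0$ is appropriate. But the proof has a genuine gap at exactly the step you yourself flag. The real Fano axiom says that three diagonal \emph{lines} of a complete quadrilateral are not concurrent at a single \emph{point}. In your configuration the three diagonals are segments, and each is only required to pass through the (possibly large) convex region $C_{\ell_0}=U_a\cap U_b\cap U_c$ — not through a common point. Nothing in the realization forces these segments to be concurrent: each could cross $C_{\ell_0}$ at a different location, and indeed an arbitrary convex region can easily absorb three non-concurrent segments. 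So there is no projective contradiction to invoke, and the sentence ``I would feed the six points $p_L$ and the relevant pairwise-intersection regions into Radon's theorem...to produce the forced common point of the three diagonals'' asserts precisely what you cannot force. As written, the proof does not reach a contradiction.

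The paper closes this gap using a different idea, which is worth knowing because it sidesteps the concurrency problem entirely. Rather than fixing a line of $\FP$ and arguing projectively, one picks one witness point $p_{ijk}\in X_i\cap X_j\cap X_k$ for each of the seven maximal codewords and, using compactness, chooses these seven points so as to \emph{minimize the area of their convex hull}. (Helly's theorem rules out the degenerate collinear case up front.) With minimality in hand, one splits into two cases according to whether $\conv(P)$ is a triangle or a polygon with $\ge 4$ vertices, and in each case examines crossing points of carefully chosen segments: any such crossing point lands in the intersection of two of the $X_i$, hence in a third, hence could have been substituted for one of the chosen $p_{ijk}$ to shrink the hull — contradicting minimality. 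This extremal argument replaces your (un-forceable) concurrency with a variational constraint that the realization \emph{is} obligated to respect. If you want to salvage your approach, you would need to supply a rigid replacement for ``concurrency,'' and the area-minimization trick is the natural candidate; note too that the degenerate sub-cases you worry about (collinear $p_L$'s, lower-dimensional centers) are handled automatically once minimality and the Helly step are in place.
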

\begin{proof}
Suppose for contradiction that $\{X_1,\ldots, X_7\}$ is a closed convex realization $\FP$ in $\R^2$.
By intersecting our sets with a sufficiently large closed ball, we may assume that the realization is compact.
Choose a set of seven points 
\[
P = \{p_{ijk}\in X_i\cap X_j\cap X_k \mid \{i,j,k\}\in \FP\}.
\]
Observe that any such $P$ must have positive area.
If not, $P$ would be contained in a line $L$, and applying Helly's theorem to the collection of segments $L\cap X_i$ would yield a point in all seven sets, a contradiction.
By compactness, we may choose $P$ to minimize the area of $\conv(P)$: we are choosing seven points from disjoint compact subsets of $\R^2$, and the area of their convex hull varies continuously with the choices of points.
From here on, we assume that $P$ has minimum area among all possible choices of $P$, and consider two cases.
\smallskip

\noindent\textbf{Case 1: $\conv(P)$ is not a triangle.} 
It will suffice to restrict our attention to four vertices on the boundary of $\conv(P)$.
Consider the crossing point of the diagonals determined by these vertices.
Up to symmetry (i.e. by \Cref{symmetry}), we may assume that $p_{123}$ and $p_{145}$ are the endpoints of one of these diagonals.
Since the line segment between any two vertices is contained in some $X_i$, the crossing point is contained in two sets, and hence also in a third. 
The possible codewords arising at the crossing point are $123, 145$, and $167$, but the former two are impossible, for otherwise we could replace $p_{123}$ or $p_{145}$ by the crossing point, obtaining a smaller area for the convex hull of $P$.
Thus the codeword arising at the crossing point is $167$, and since this point lies in the convex hull of the other points, we can assume without loss of generality that it is in fact equal to $p_{167}$. 

Observe that $\FP$ is invariant under the permutation $(1)(2435)(67)$, and this permutation induces a cyclic permutation of the maximal codewords not containing 1, while transposing $123$ and $145$ and leaving $167$ fixed. 
Applying this permutation, we can assume that $p_{246}$ is one of our remaining two boundary points of interest, which leaves $p_{356}$ as the only valid choice for the opposite point.
The situation is shown in \Cref{fig:fanoR2-quad}. 
\begin{figure}[h]
\[
\includegraphics{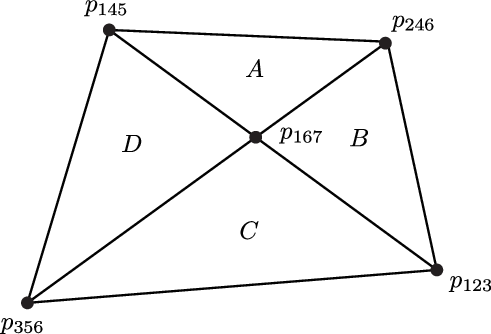}
\]
\caption{Case 1 in the proof of \Cref{thm:fanoR2}.}\label{fig:fanoR2-quad}
\end{figure}

Now, we claim that there is a point giving rise to one of the codewords $257$ or $347$ in the closed regions $A$, $B$, $C$, or $D$ in \Cref{fig:fanoR2-quad}. 
If $p_{257}$ lies in one of these regions the claim is immediate. 
If $p_{257}$ lies outside of the quadrilateral, then the segment $\overline{p_{167}p_{257}}$ is contained in $X_7$ and must cross one of the edges of the quadrilateral.
The edges lie in $X_2$, $X_3$, $X_4$, and $X_5$ respectively, so this crossing point must give rise to one of the codewords $257$ or $347$.

Considering the various cases, we see that we have have arrived at a contradiction.
If the codeword $257$ appears in region $A$ at a point $q$, then the crossing point of $\overline{q p_{123}}$ and $\overline{p_{167}p_{246}}$ lies in $X_2$, $X_4$, and $X_6$, and we could have replaced $p_{246}$ with this point to obtain a smaller convex hull.
Similar contradictions arise in regions $B$, $C$, and $D$. 
For example, in region $B$ the crossing point of the segments $\overline{qp_{145}}$ and $\overline{p_{167}p_{246}}$ will lie in $X_3$, $X_5$, and $X_6$, contradicting our choice of $p_{356}$. 
If the codeword $347$ arises in region $A$, $B$, $C$, or $D$ then examining crossing points of appropriate line segments yields analogous contradictions. 

\noindent\textbf{Case 2: $\conv(P)$ is a triangle.} 
Up to symmetry, we may assume that two of the vertices of this triangle are $p_{123}$ and $p_{145}$.
The third vertex cannot be $p_{167}$, since this would mean $P\subseteq X_1$, a contradiction.
We may assume without loss of generality that the third vertex is $p_{246}$ by applying the permutation $(1)(2435)(67)$ that we used in the previous case.
Applying an affine transformation, we can assume that $\conv(P)$ is an equilateral triangle with $p_{246}$ at its apex.

We now claim that any choice for the set $Q = \{p_{167}, p_{257}, p_{347}\}$ cannot be collinear.  
Suppose for contradiction that $Q$ can be chosen to lie on a line.
The permutation $(124)(365)(7)$ is a symmetry of $\FP$, and cyclically permutes the vertices of our equilateral triangle as well as the points in $Q$.
Applying this permutation, we may assume that $p_{167}$ lies between $p_{257}$ and $p_{347}$. 
But $p_{257}$ and $p_{347}$ must lie outside the triangle $\conv\{p_{123}, p_{145}, p_{167}\}\subseteq X_1$, and so one of the line segments $\overline{p_{257}p_{123}}$ or $\overline{p_{347}p_{123}}$ crosses the line segment $\overline{p_{145}p_{167}}$ (the first case is shown in \Cref{fig:fanoR2-1}). 
This crossing point is contained in $X_1$, $X_2$, and $X_3$, and could have been chosen as $p_{123}$ to yield a smaller area for $\conv(P)$, a contradiction. 
Hence $Q$ comprises the vertices of a triangle.
\begin{figure}[h]
\[
\includegraphics{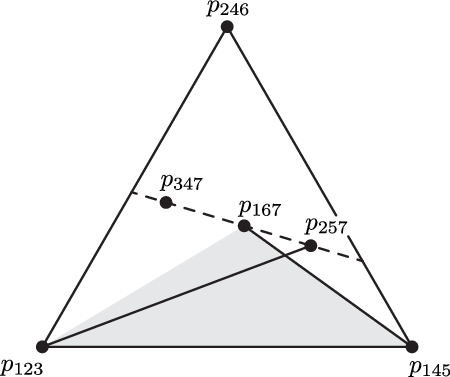}
\]
\caption{Proving that $Q = \{p_{167}, p_{257}, p_{347}\}$ cannot be collinear.}\label{fig:fanoR2-1}
\end{figure}

But now consider the point $p_{356}$.
This point cannot be contained in $\conv(Q)\subseteq X_7$.
Hence the line segment from $p_{356}$ to one of the vertices of $Q$ crosses the edge determined by the other two vertices of $Q$.
Again using the symmetry $(124)(365)(7)$, we can reduce to the situation shown in \Cref{fig:fanoR2-2}. 
But here the crossing point of $\overline{p_{167}p_{356}}$ and $\overline{p_{257}p_{347}}$ lies in $X_6$, $X_7$, and hence also $X_1$.
Thus we could have chosen $Q$ to be collinear, and we have arrived at a final contradiction.
\begin{figure}[h]
\[
\includegraphics{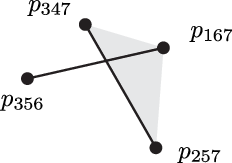}
\]
\caption{The final contradiction in Case 2: the set $Q$ could have been chosen to be collinear.}\label{fig:fanoR2-2}
\end{figure}
\end{proof}

\begin{remark}We have shown that $\FP$ cannot be realized by closed convex sets in $\R^2$. 
Our investigations strongly indicate that in fact $\FP$ cannot be realized by any convex sets (not necessarily open or closed) in $\R^2$, but it seems that a complete proof in the style of the one given above would entail significant casework.
We in fact conjecture the even stronger result that there is no collection of convex sets $\{C_1,\ldots, C_7\}$ with $C_i\cap C_j\cap C_k\neq\emptyset$ precisely when $ijk\in \FP$. 
In other words, we conjecture that the abstract simplicial complex $\Delta(\FP)$, which is obtained by adding every pair to $\FP$, is not ``$2$-representable."
For background on $d$-representable complexes, we recommend Tancer's 2011 survey paper~\cite{tancer-survey}.
\end{remark}
\section{The open dimension of $\FP$}

Cruz, Giusti, Itskov, and Kronholm~\cite{CGIK} showed that every intersection complete code with $m$ maximal codewords has open embedding dimension at most $\max\{2,m-1\}$, and hence it follows that $\odim(\FP)\le 6$.
All that remains to establish \Cref{thm:fp} is to prove that $\odim(\FP)>3$, which we will do below.
\begin{theorem}\label{odim3}
The Fano plane code has no open convex realization in $\R^3$. That is, $\odim(\FP)\ge 4$. 
\end{theorem}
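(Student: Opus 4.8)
The plan is to assume, toward a contradiction, that $\{U_1,\dots,U_7\}$ is an open convex realization of $\FP$ in $\R^3$, and then to produce a point lying in three pairwise distinct sets $U_i\cap U_j\cap U_k$ whose index triple $ijk$ is \emph{not} a line of the Fano plane. Since no such triple is a codeword of $\FP$, this is the contradiction we are after, and it gives $\odim(\FP)\ge 4$ once combined with the octahedral realization, which already shows $\odim(\FP)\ge\cdim(\FP)=3$ is not enough.

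First I would record the two structural facts that drive everything. For each line $ijk$ of the Fano plane, the only codeword of $\FP$ containing two of $i,j,k$ is $ijk$ itself (two points determine a unique line), so $\{U_i,U_j,U_k\}$ is a sunflower and Lemmas~\ref{lem:LSW} and~\ref{collinear} apply to it. Dually, two distinct lines $d,e$ of the Fano plane meet in a unique point $w$, so whenever $p_d\in\bigcap_{i\in d}U_i$ and $p_e\in\bigcap_{j\in e}U_j$, convexity of $U_w$ gives $\overline{p_dp_e}\subseteq U_w$. Next, fix a representative point $p_m\in\bigcap_{i\in m}U_i$ for each maximal codeword $m$ of $\FP$. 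Because the $U_i$ are open, an arbitrarily small perturbation keeps each $p_m$ inside all of its prescribed sets, so we may assume the seven points are in general position in $\R^3$. Applying Lemma~\ref{3--2} to any six of them yields five points with a $3$--$2$ Radon split $\{p_a,p_b,p_c\}\sqcup\{p_d,p_e\}$; let $r$ be the Radon point, which lies in the relative interior of the triangle $\conv\{p_a,p_b,p_c\}$ and in the relative interior of the segment $\overline{p_dp_e}$.

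Now I would locate $r$. From $\overline{p_dp_e}\subseteq U_w$ (with $w=d\cap e$) we immediately get $r\in U_w$. To place $r$ in further sets, apply Lemma~\ref{pierce} inside the plane of the triangle: the line through $p_a$ and $r$ meets the edge $\overline{p_bp_c}\subseteq U_u$ (with $u=b\cap c$) in a point $s$, and $r\in\overline{p_as}$; running this from each vertex and combining with the sunflower lemma (Lemma~\ref{collinear}) along the lines $a$, $b$, $c$ pins down the codewords realized by $r$ and by these auxiliary crossing points. The analysis is organized by whether the three Fano lines $a,b,c$ are concurrent --- a counting argument (five lines contribute $15$ point--line incidences among $7$ points, so if no point lay on three of them we would have at most $14$) shows a concurrent triple cannot always be avoided, so this case genuinely arises --- and by how the pivot points $w$ and $u$ meet $a,b,c$. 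In each configuration the double transitivity of Lemma~\ref{symmetry} normalizes us to a single picture, and the chain of segment/sunflower arguments forces one of the involved points into three pairwise distinct sets whose index triple is not a Fano line, contradicting $3$-sparsity of $\FP$.

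The step I expect to be the real work is this final case analysis: even after using the symmetry group, several genuinely distinct incidence patterns remain for the five chosen Fano lines together with the two pivot points, and each needs its own short segment-chasing argument leading to a forbidden triple. A secondary point requiring care is that the $U_i$ are open and a priori unbounded. On one hand this is exactly what lets us perturb freely into general position; on the other hand it means one cannot directly imitate the compactness-and-minimality scheme used in the proof of Theorem~\ref{thm:fanoR2}, since passing to closures can create spurious intersections (e.g.\ destroying the fact that $U_1\cap U_2\cap U_4=\emptyset$). The contradiction must therefore be extracted structurally, from the Radon configuration and the sunflower/pierce lemmas, rather than by minimizing the volume of $\conv(P)$.
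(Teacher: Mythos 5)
Your outline identifies the right tools and matches the paper's actual strategy: put the seven representative points $p_m$ in general position (openness allows this), apply \Cref{3--2} to extract a $3$--$2$ Radon split, normalize it via \Cref{symmetry}, and funnel the Radon point into too many sets via \Cref{pierce} and \Cref{collinear}. The observation that one cannot reuse the compactness/volume-minimization argument from \Cref{thm:fanoR2} in the open setting is also correct and worth noting. However, the proposal stops precisely where the proof has to actually happen, and you acknowledge this yourself: "each needs its own short segment-chasing argument" is a promissory note, not an argument.

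There are two concrete gaps. First, the casework is not as open-ended as you suggest: once \Cref{symmetry} normalizes $P_2=\{p_{123},p_{145}\}$ (so the Radon point $q$ lies in $U_1$), the three points of $P_1$ are drawn from $\{p_{167},p_{246},p_{257},p_{347},p_{356}\}$, \emph{every one of which contains index $6$ or $7$} (and $p_{167}$ contains both). A simple pigeonhole then leaves exactly two cases: all of $P_1$ is in a common $U_6$ or $U_7$ (whence $q\in U_1\cap U_6\cap U_7$ by convexity and the sunflower structure), or two share (say) $U_7$ and the third is in $U_6$, in which case a single application of \Cref{pierce} plus \Cref{collinear} gives the same conclusion $q\in U_1\cap U_6\cap U_7$. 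Your "run \Cref{pierce} from each vertex" and "organize by concurrency of $a,b,c$ and the pivot incidences" is pointed in this direction but is unnecessarily diffuse, and you never land on the one picture that matters. Second — and this is the step entirely absent from the proposal — the contradiction is produced by a \emph{second}, independent use of \Cref{collinear} on the Radon segment itself: $q\in\overline{p_{123}p_{145}}$ with $p_{123}\in U_2$, $p_{145}\in U_4$, $q\in U_6$, and $\{U_2,U_4,U_6\}$ is a sunflower, forcing $q\in U_2\cap U_4\cap U_6$ and hence $q\in U_1\cap U_2\cap U_4\cap U_6\cap U_7$, a non-codeword. Without this second application, placing $q$ in $U_1\cap U_6\cap U_7$ alone yields no contradiction since $167\in\FP$.

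Separately, the incidence-counting digression is both incorrect in purpose and unneeded: showing that \emph{some} three of the five chosen Fano lines are concurrent says nothing about whether the particular triple $P_1$ selected by the Radon partition (which is determined by the geometry of the points, not the combinatorics of the lines) is concurrent. The paper does not need concurrency to always arise; it handles the concurrent case (Case 1) and the non-concurrent case (Case 2) separately, and both terminate in the same conclusion $q\in U_1\cap U_6\cap U_7$.
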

\begin{proof}
    Suppose for the sake of contradiction that $\U=\{U_1,\ldots,U_7\}$ realizes $\FP$ with convex open sets in $\R^3$. 
    Choose $P=\{p_{123},p_{145},p_{167},p_{246},p_{257},p_{347},p_{356}\}$ such that $p_{ijk}\in U_i\cap U_j\cap U_k$ and the points are in general position. This can be done because $U_i\cap U_j\cap U_k$ are open and nonempty for the chosen points. 
    \par Employing \Cref{3--2}, we may choose a 3--2 Radon partition $P_1\sqcup P_2$ on five out of the first six points. Let $P_1$ be the set consisting of three points and $P_2$ be the set consisting of two points. 
    Let $q\in \conv(P_1)\cap \conv(P_2)$ be the Radon point of this partition.
    By \Cref{symmetry}, we can assume without loss of generality that $P_2=\{p_{123}, p_{145}\}$ (note using this symmetry may mean that $p_{356}$ ends up in $P_1$, however this will not cause any problems).
    By convexity of $U_1$, $\conv(P_2)\subseteq U_1$ so $q\in U_1$.
    We now consider two possible cases, both of which will establish that $q$ in fact lies in $U_1\cap U_6\cap U_7$.
    These cases are illustrated in \Cref{fig:R3-cases}.\medskip
    
    \noindent \textbf{Case 1.} All elements of $P_1$ are in a common set $U_6$ or $U_7$.\smallskip
    
    \noindent Without loss of generality, let $P_1=\{p_{167},p_{246},p_{356}\}\subseteq U_6$. A similar argument can be applied to $P_1=\{p_{167},p_{257},p_{347}\}\subset U_7$.
     By convexity of $U_6$, $\conv(P_1)\subseteq U_6$ so $q\in U_6$. Since $\U$ realizes $\FP$, the fact that $q$ lies in $U_1\cap U_6$ implies $q\in U_1\cap U_6\cap U_7$.\medskip
    
    \noindent  \textbf{Case 2.} All elements of $P_1$ are not in a common set.\smallskip
    
    \noindent Without loss of generality, let $P_1=\{p_{167},p_{246},p_{347}\}$. This can be assumed because the following argument relies on two points of $P_1$ being in a common set $U_6$ and the remaining point being in $U_7$ (or vice versa). Indeed, any 3 element subset of $\{p_{246},p_{356},p_{167},p_{257},p_{347}\}$ contains 2 elements from $U_6$ or from $U_7$ by the pigeonhole principle. By assumption, the remaining point will not be in a common set with these two and will therefore be in the opposing set ($U_7$ or $U_6$, respectively). 
    
    Now, consider the line $L$ through $p_{246}$ and $q$. By \Cref{pierce}, we may choose $q'\in L\,\cap\,\overline{p_{167}p_{347}}$. By convexity of $U_7$, $q' \in U_7$.
    Now, noting that $q\in \overline{p_{246},q^\prime}$, $p_{246}\in U_6$, $q\in U_1$, $q^\prime \in U_7$, and $\{U_1,U_6,U_7\}$ form a sunflower, we can apply \Cref{collinear} and conclude that $q\in U_1\cap U_6\cap U_7$.\medskip
    
     We have shown that in either case we have $q\in U_1\cap U_6\cap U_7$, and we are ready to derive our final contradiction.
     Since $q\in \overline{p_{123}p_{145}}$, $p_{123}\in U_2$, $p_{145}\in U_4$, and $q\in U_6$ and the collection $\{U_2,U_4,U_6\}$ forms a sunflower, we can apply \Cref{collinear} to conclude that $q\in U_2\cap U_4\cap U_6$.
     But then $q$ lies in $U_1\cap U_2\cap U_4\cap U_6\cap U_7$, which contradicts the assumption that $\U$ realizes $\FP$.
     \begin{figure}[h]
     \[
     \includegraphics{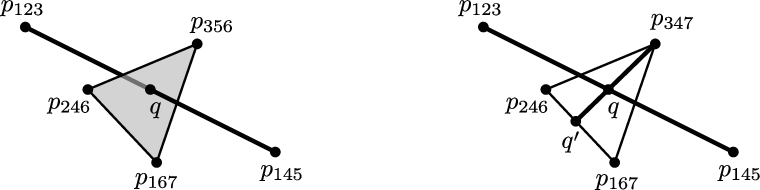}
     \]
     \caption{The two cases in the proof of \Cref{odim3}.}\label{fig:R3-cases}
     \end{figure}
\end{proof}

\begin{section}{Studying $\FP$ in dimension four.}
We are not sure whether or not $\odim(\FP)$ exceeds four.
This section will discuss the limitations of the argument used to prove \Cref{odim3} when applied in $\R^4$.
One may suppose for contradiction that  $\U = \{U_1,\ldots, U_7\}$ is an open convex realization of $\FP$ in $\R^4$, and similarly choose a point set $P = \{p_{123},p_{145},p_{167},p_{246},p_{257},p_{347},p_{356}\}$ with each point chosen from the intersection of sets it is labeled by. 
We still have sufficiently many points to apply Radon's theorem, but now we have three cases to consider: a 5--1 split, a 4--2 split, and a 3--3 split.

The first two cases are easier to consider, since the smaller part of the split will be contained in some $U_i$, and hence the Radon point will be contained in some $U_i$.
In fact, similar---albeit lengthier and more technical---geometric arguments to the ones used to prove \Cref{odim3} are sufficient to rule these cases out.

When a Radon partition of points in $P$ is a 3--3 split, the Radon point may or may not be contained in some $U_i$.
For example, if $P = \{p_{123}, p_{246}, p_{347}\}\sqcup\{p_{145}, p_{167}, p_{356}\}$, then neither part of the partition is contained in any $U_i$, and it is not clear how to proceed. 
However, since $P$ has seven points, there are seven different subsets of size six which we can try.
Could all of these lead to the ``bad" 3--3 split case?

The answer is in fact yes, with the relevant arrangement being given by points along the moment curve in $\R^4$.
In particular, suppose that the points of $P$ appear in the following order along the moment curve:\[
p_{123} <
p_{145} <
p_{246} <
p_{167}<
p_{347}<
p_{356}<
p_{257}.
\]
 The relevant property of this ordering is that all six pairs of adjacent points, as well as the pair of endpoints $p_{123}$ and $p_{257}$, are contained in a  unique set $U_i$.  Radon partitions of points along the moment curve are always 3--3 splits whose parts ``interlace," i.e. alternate with one another.
 For example, the Radon partition of the last six points is given by $P_1=\{p_{145}, p_{167}, p_{356}\}$ and $P_2=\{p_{246},p_{347},p_{257}\}$.
 One may check that all seven Radon partitions which follow this alternating pattern will yield a 3--3 split such that $P_1$ and $P_2$ are both not a subset of any $U_i$.
 Hence we are not able to derive a contradiction analogous to our previous arguments in this case. 

It is reasonable to speculate that such an arrangement could be used to construct an open convex realization of $\FP$ in $\R^4$. 
Starting with the seven triangles which are convex hulls of the points containing a given index, we have a closed convex realization of $\FP$.
Could this be ``thickened" appropriately to obtain an open realization, for example by taking a Minkowski sum with a carefully chosen open convex set?
This may be the most promising path towards determining the exact open embedding dimension of $\FP$. 

\end{section}

\section{Realizing degree two codes with boxes}\label{sec:boxes}

We will use and inductive approach to prove \Cref{thm:degreetwo}, facilitated by the lemmas below.
Before proceeding, we note several features of degree two codes that will be useful in our proofs.
First, there are only two types of RF relation with degree two: \begin{align*}
    &\text{$(\{i,j\}, \emptyset)$ which corresponds to $U_i\cap U_j = \emptyset$, and}\\
   &\text{$(\{i\}, \{j\})$ which corresponds to $U_i\subseteq U_j$.}
\end{align*}
We will write these relations more concisely as $(ij, \emptyset)$ and $(i, j)$ respectively.
We pause briefly to justify that deleting an index does not increase the degree of a code. 

\begin{lemma}\label{lem:delete}
Let $\C\subseteq 2^{[n]}$ be a degree two code.
For any $i\in[n]$, the code \[
\C\setminus i \od \{c\setminus \{i\}\mid c \in \C\}
\]
is degree at most two.
\end{lemma}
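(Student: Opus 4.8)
The plan is to show that no RF relation of $\C \setminus i$ has degree exceeding two by tracing each potential relation of $\C\setminus i$ back to a relation of $\C$. Let $\U = \{U_1,\ldots,U_n\}$ realize $\C$ in some $\R^d$ (such a realization exists by Franke--Muthiah, though in fact we only need a realization to exist in order to talk about RF relations — these are purely combinatorial data of $\C$). Deleting the index $i$ corresponds to deleting the set $U_i$ from the realization, giving $\U \setminus U_i$ as a realization of $\C \setminus i$. Now suppose $(\sigma,\tau)$ with $\sigma,\tau \subseteq [n]\setminus\{i\}$ is a minimal RF relation of $\C \setminus i$, so $\bigcap_{j \in \sigma} U_j \subseteq \bigcup_{k \in \tau} U_k$. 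The same containment holds among the sets of $\U$ (the sets themselves did not change), so $(\sigma,\tau)$ is an RF relation of $\C$ as well — but possibly not a minimal one. The key point is that we can shrink it to a minimal relation of $\C$ without reintroducing the index $i$: there is a minimal RF relation $(\sigma',\tau')$ of $\C$ with $\sigma' \subseteq \sigma$ and $\tau' \subseteq \tau$, obtained by greedily deleting elements, and since $i \notin \sigma \cup \tau$ we automatically have $i \notin \sigma' \cup \tau'$.

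From here the argument splits according to the two possible forms of a degree-two relation $(\sigma',\tau')$ of $\C$, as listed just before the lemma. If $(\sigma',\tau') = (\{a,b\},\emptyset)$ with $a,b \neq i$, then this is already a relation of $\C\setminus i$ (the condition $U_a \cap U_b = \emptyset$ is unaffected by deleting $U_i$), so $\sigma' \subseteq \sigma$, $\tau' = \emptyset \subseteq \tau$ forces minimality of $(\sigma,\tau)$ to give $(\sigma,\tau) = (\{a,b\},\emptyset)$, which has degree two. If $(\sigma',\tau') = (\{a\},\{b\})$ with $a,b \neq i$, meaning $U_a \subseteq U_b$, this relation also survives deletion of $U_i$, and again minimality of $(\sigma,\tau)$ together with $\sigma' \subseteq \sigma$, $\tau' \subseteq \tau$ forces $(\sigma,\tau) = (\{a\},\{b\})$, degree two. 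In either case $(\sigma,\tau)$ has degree two, so every relation in $\CF(\C \setminus i)$ has degree at most two, which is the claim.

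The one subtlety to handle carefully — and the step I expect to be the main obstacle — is the assertion that a minimal RF relation of $\C \setminus i$, viewed as an RF relation of $\C$, can be refined to a \emph{minimal} RF relation of $\C$ lying entirely within $[n]\setminus\{i\}$, and that this refined relation, reinterpreted back in $\C\setminus i$, still dominates (contains) $(\sigma,\tau)$ in a way that forces equality. Concretely: once we know $(\sigma',\tau') \in \CF(\C)$ has the form $(\{a,b\},\emptyset)$ or $(\{a\},\{b\})$ with all indices distinct from $i$, it is an RF relation of $\C \setminus i$ as well; since it is contained in $(\sigma,\tau)$ and $(\sigma,\tau)$ is minimal for $\C \setminus i$, we must have $(\sigma,\tau) = (\sigma',\tau')$. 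The reason $(\sigma',\tau')$ remains an RF relation after deleting $U_i$ is precisely that neither of the two degree-two relation types mentions $i$, so removing $U_i$ from the realization changes neither side of the containment. Writing this chain of containments and minimality deductions out carefully is routine but is where all the content lies; I would present it as a short explicit lemma-internal argument rather than appealing to any black box, since the facts needed (RF relations are realization-independent, and minimal relations refine arbitrary relations) are already available in the paper's setup.
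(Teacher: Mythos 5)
Your proof is correct and takes essentially the same approach as the paper's: observe that RF relations over indices in $[n]\setminus\{i\}$ are unchanged by deleting $U_i$ from a realization, so a minimal relation of $\C\setminus i$ corresponds to a degree-two relation of $\C$. The paper's own proof is terser and asserts directly that the minimal RF relation of $\C\setminus i$ is already minimal for $\C$; your version is slightly more careful in explicitly shrinking to a minimal relation $(\sigma',\tau')$ of $\C$ and then invoking minimality of $(\sigma,\tau)$ in $\C\setminus i$ to force equality, which fills in the implicit step (the reverse containment of RF relations) that makes the paper's one-line assertion go through.
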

\begin{proof}
Let $(\sigma,\tau)$ be a minimal RF relation of $\C \setminus i$.
Every RF relation for $\C\setminus i$ is an RF relation for $\C$, and hence $(\sigma,\tau)$ is in fact a minimal RF relation for $\C$.
Hence $(\sigma,\tau)$ has degree at most two, and it follows that $\C\setminus i$ is degree at most two. 
\end{proof}

The only RF relations with degree one are of the form $(i,\emptyset)$, corresponding to $U_i = \emptyset$.
By relabeling such indices to the end, and then forgetting about them, we can always reduce a degree two code $\C$ to an equivalent code where every RF relation has degree \emph{exactly} two, and in particular all sets in a realization of $\C$ are nonempty. 

We say that an index $i\in[n]$ is \emph{inclusion minimal} in a code $\C$ if there is no $j\neq i$ with $(j, i)$ an RF relation of $\C$ (equivalently, if $U_i$ is inclusion-minimal among all sets in any realization of $\C$). 
Note that inclusion minimal indices always exist, provided that $\C$ does not have any indices which appear in identical sets of codewords.
If two indices $i$ and $j$ do appear in identical sets of codewords, then we must have $U_i = U_j$ in every realization of $\C$, and thus we must have $(i,j)$ and $(j,i)$ both as RF relations for $\C$. 
If we are only interested in forming a realization of $\C$ of a certain type (by convex sets, or boxes, for example) then we can apply \Cref{lem:delete} to delete one of these indices.
In this way, we can reduce to the case where every two indices in $\C$ have distinct behavior. 

One last important feature of degree two codes is that they are \emph{intersection complete}: the intersection of any two codewords is again a codeword.
This fact is nontrivial to prove, but can be inferred as an immediate consequence of work of Curto, Gross, Jeffries, Morrison, Rosen, Shiu, and Youngs~\cite[Proposition 3.7]{signatures}.
This fact streamlines the proof of the following lemma.

\begin{lemma}\label{lem:deletion-is-subset}
Let $\C\subseteq 2^{[n]}$ be a degree two code, and let $i\in[n]$ be an inclusion minimal index.
Then $\C\setminus i$ is a subset of $\C$.
\end{lemma}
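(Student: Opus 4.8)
The plan is to show that every codeword of $\C\setminus i$ is already a codeword of $\C$. Take $c\in\C$; we must show $c\setminus\{i\}\in\C$. If $i\notin c$ there is nothing to do, so suppose $i\in c$ and set $c' = c\setminus\{i\}$. The key observation is that since $i$ is inclusion minimal, no relation $(j,i)$ holds, so in any realization $\U$ of $\C$ the set $U_i$ is not contained in any other $U_j$; in particular, for each $j\in c'$ there is a point in $U_i\setminus U_j$. More usefully, I would argue directly with codewords: because there is no RF relation $(j,i)$, for each $j$ with $j\neq i$ there must be a codeword of $\C$ containing $j$ but not $i$ (otherwise $U_j\subseteq U_i$ would force... wait, that gives $(j,i)$, which is exactly what is excluded). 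So for each $j\in c'$, pick a codeword $d_j\in\C$ with $j\in d_j$ and $i\notin d_j$.

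Now the goal is to manufacture $c'$ itself as a codeword by intersecting things. Here is where intersection completeness of $\C$ (noted in the excerpt, via \cite{signatures}) does the work. Consider the codeword $c$ together with the codewords $d_j$ for $j\in c'$. Since $\C$ is intersection complete, any finite intersection of codewords is a codeword. I would like to claim $c' = c\cap\bigl(\bigcup_{j\in c'} d_j\bigr)$ — but unions of codewords need not be codewords, so that is not directly legitimate. Instead I would proceed one index at a time: it suffices to show that for each fixed $j\in c'$, the set $c\setminus\{i\}$ arises, or more cleverly, to show $c\cap d_j$ removes $i$ while keeping $j$, and then somehow combine. The cleaner route: show directly that $c' \in \C$ by exhibiting it as an intersection. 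Observe $c \cap d_j$ is a codeword (intersection complete) containing $j$ but not $i$, and it is contained in $c$. If I can show that the union over $j\in c'$ of the codewords $c\cap d_j$ equals $c'$ and is itself a codeword, I am done; the union equality holds because each $c\cap d_j\subseteq c' $ and each $j\in c\cap d_j$, so the union is exactly $c'$. The remaining issue is why this particular union is a codeword.

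The resolution should come from a more careful use of minimality of $i$ at the level of a realization, or from the structure of degree two codes. In a realization $\U$, inclusion minimality of $U_i$ means $U_i$ is not contained in any single $U_j$, but for degree two codes intersection completeness is strong: every maximal codeword's behavior is controlled by the $(i,j)$ and $(ij,\emptyset)$ relations. I would argue: pick a point $p$ realizing codeword $c$. Its label is $c\ni i$. Now "push $p$ out of $U_i$": since $U_i\not\subseteq\bigcup_{j\in c'}U_j$ would be the wrong direction, instead use that $U_i$ is inclusion minimal to find, near the part of $U_i$ meeting all of $\{U_j : j\in c'\}$... Actually the honest path is: since there is no relation $(i, j)$-style forcing, and since $\C$ is intersection complete, the codeword $c' = \bigcap_{j \in c'}(\text{some codeword containing } j \text{ not } i)$ restricted appropriately lands in $\C$; the main obstacle, which I expect to be the crux of the argument, is precisely showing that removing $i$ from $c$ does not destroy membership — i.e. that the "atom" labelled $c$ has a neighbor labelled $c'$. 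I would handle this by a local geometric argument on a realization: take $p$ with label $c$, take $q\in U_j$ for the unique... no, take $q$ outside $U_i$ but chosen so that moving from $p$ toward $q$ exits $U_i$ before exiting any $U_j$ with $j\in c'$; such $q$ exists because $U_i$ is not contained in the union of the $U_j$'s is not guaranteed, so instead one uses that $U_i$, being inclusion-minimal and the code being intersection complete, has the property that its "trace" on each larger set is governed by a single containment — making the first exit point from $U_i$ along a generic short segment still lie in all $U_j$, $j\in c'$. That point has label $c'$, proving $c'\in\C$ and hence $\C\setminus i\subseteq\C$.
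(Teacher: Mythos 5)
Your attempt identifies the right ingredients — intersection completeness and the absence of any relation $(j,i)$ — but never assembles them into a complete argument, and you yourself flag the gap: you cannot show that the union $\bigcup_{j\in c'}(c\cap d_j)$ is a codeword, and the geometric "first exit point" argument you sketch at the end is not justified (there is no reason a generic short segment must exit $U_i$ before exiting any $U_j$ with $j\in c'$; that is essentially the conclusion you are trying to prove, restated geometrically).

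The missing idea is to argue by contradiction at the level of RF relations rather than trying to exhibit $c'=c\setminus\{i\}$ directly. Suppose $c'\notin\C$. Then every codeword containing $c'$ must also contain $i$: if some codeword $d\supseteq c'$ had $i\notin d$, intersection completeness would make $c\cap d$ a codeword, and since $c'\subseteq c\cap d\subseteq c\setminus\{i\}=c'$ this forces $c\cap d=c'$, a contradiction. Thus $(c', \{i\})$ is an RF relation. Now invoke degree two: this relation must contain a minimal RF relation of degree at most two. It cannot be of the form $(\{j,k\},\emptyset)$ with $j,k\in c'$, because $c$ is a codeword and so $U_j\cap U_k\neq\emptyset$. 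The only remaining possibility is $(j,i)$ for some $j\in c'$, which contradicts inclusion minimality of $i$. Your proof builds the codewords $d_j$ and tries to synthesize $c'$ from below via intersections; the paper instead observes that failure of $c'\in\C$ produces a forbidden RF relation from above. The latter is where degree two actually does its work, and without it your approach has no way to close.
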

\begin{proof}
Let $c\in \C$ be a codeword with $i\in c$.
We must argue that $c\setminus \{i\}$ is also a codeword of $\C$.
For contradiction, suppose not.
Observe that every codeword containing $c\setminus \{i\}$ must also contain $i$: otherwise we could intersect such a codeword with $c$ to obtain $c\setminus \{i\}$ as a codeword, since degree two codes are intersection complete.
This means that $(c\setminus \{i\}, i)$ is an RF relation.
Since $\C$ is degree two, this relation must reduce to a minimal relation of degree at most two.
A relation of the form $(\{j,k\},\emptyset)$ with $j,k\in c\setminus  \{i\}$ is not possible since $c$ is a codeword, and so we must have a relation $(j, i)$ where $j\in c\setminus \{i\}$.
This contradicts the fact that $i$ is inclusion minimal. 
\end{proof}

The most important tool for our proof is the following lemma, which allows us to extend a realization of $\C\setminus n$ in $\R^d$ to a realization of $\C$ in $\R^{d+1}$ whenever $\C$ is degree two and $n$ is inclusion-minimal.

\begin{lemma}\label{lem:extend}
Let $\C\subseteq 2^{[n]}$ be a degree two code, and suppose $n$ is an inclusion-minimal index. 
Define \begin{align*}
\sigma &= \{i\in [n-1] \mid \text{$(n, i)$ is an RF relation}\},\quad \text{and}\\
\tau &= \{i\in [n-1] \mid \text{$(\{i, n\},\emptyset)$ is not an RF relation}\}.
\end{align*}
Given a realization $\U = \{U_1,\ldots, U_{n-1}\}$ of $\C\setminus n$ in $\R^d$, the collection $\V = \{V_1,\ldots, V_n\}$ given by \[
V_i = \begin{cases}
U_i \times [0,1] & \text{if }i\in [n-1]\setminus \tau,\\
U_i\times [0,3] & \text{if }i \in \tau,\\
\left(\bigcap_{j\in\sigma} U_j\right) \times [2,3] & \text{if }i = n.
\end{cases}\]
is a realization of $\C$ in $\R^{d+1}$. 
\end{lemma}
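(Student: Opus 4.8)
My plan is to prove the stronger statement $\code(\V)=\C$ directly, by slicing $\R^{d+1}$ along the new coordinate. Write points of $\R^{d+1}$ as $(x,t)$ with $x\in\R^d$, and for $x\in\R^d$ let $c_x=\{i\in[n-1]\mid x\in U_i\}$ be the codeword of $x$ under $\U$, so that $\{c_x\mid x\in\R^d\}=\code(\U)=\C\setminus n$. I would first record two easy observations: having relabelled away degree-one indices as in the discussion above, every $U_i$ is nonempty (inclusion-minimality of $n$ together with $U_n\neq\emptyset$ in fact forces this), and $\sigma\subseteq\tau$, since $U_n\subseteq U_j$ gives $U_j\cap U_n=U_n\neq\emptyset$; in particular $\bigcap_{j\in\sigma}U_j\neq\emptyset$, so $V_n\neq\emptyset$. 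Unwinding the definition of $\V$, the codeword of $(x,t)$ is $\emptyset$ if $t\notin[0,3]$; it is $c_x$ if $t\in[0,1]$; it is $c_x\cap\tau$ if $t\in(1,2)$; and if $t\in[2,3]$ it is $(c_x\cap\tau)\cup\{n\}$ when $\sigma\subseteq c_x$ and $c_x\cap\tau$ otherwise. Hence
\[
\code(\V)=\{\emptyset\}\ \cup\ (\C\setminus n)\ \cup\ \{c\cap\tau\mid c\in\C\setminus n\}\ \cup\ \{(c\cap\tau)\cup\{n\}\mid c\in\C\setminus n,\ \sigma\subseteq c\}.
\]

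The inclusion $\C\subseteq\code(\V)$ is then quick. A codeword $c\in\C$ with $n\notin c$ lies in $\C\setminus n$. A codeword $c\in\C$ with $n\in c$ satisfies $\sigma\subseteq c$ (each $j\in\sigma$ has $U_n\subseteq U_j$, so $j$ lies in every codeword containing $n$) and $c\setminus\{n\}\subseteq\tau$ (if $i,n\in c$ then $U_i\cap U_n\neq\emptyset$); hence $c=((c\setminus\{n\})\cap\tau)\cup\{n\}$ with $c\setminus\{n\}\in\C\setminus n$ and $\sigma\subseteq c\setminus\{n\}$, so $c$ belongs to the last of the four families above.

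The substance of the lemma is the reverse inclusion $\code(\V)\subseteq\C$. By \Cref{lem:deletion-is-subset} we have $\C\setminus n\subseteq\C$ and $\emptyset\in\C$, so it remains to show that $c\cap\tau\in\C$ for every $c\in\C\setminus n$, and that $(c\cap\tau)\cup\{n\}\in\C$ whenever additionally $\sigma\subseteq c$. My approach is to first establish a characterization of the codewords of a degree two code: a set $S\subseteq[n]$ is a codeword of $\C$ if and only if (i) $U_i\cap U_j\neq\emptyset$ for all $i,j\in S$, and (ii) whenever $i\in S$ and $(i,j)\in\CF(\C)$, also $j\in S$. One direction is immediate from inspecting a point whose codeword is exactly $S$; for the other, note that $(S,[n]\setminus S)$ is an RF relation precisely when $S$ is not a codeword, and — since the sets are bounded and, after relabelling, nonempty — this relation reduces to a minimal relation $(ij,\emptyset)$ with $i,j\in S$ or $(i,j)\in\CF(\C)$ with $i\in S$, $j\notin S$, contradicting (i) or (ii). With this in hand, $c\cap\tau$ satisfies (i) (it is a subset of the codeword $c$) and (ii) (if $i\in c\cap\tau$ and $(i,j)\in\CF(\C)$ then $j\in c$, and $U_j\supseteq U_i$ meets $U_n$, so $j\in c\cap\tau$), hence $c\cap\tau\in\C$; and when $\sigma\subseteq c$, the set $(c\cap\tau)\cup\{n\}$ satisfies (i) (each $i\in c\cap\tau$ lies in $\tau$, so $U_i\cap U_n\neq\emptyset$) and (ii) — the only new instance is $(n,j)\in\CF(\C)$, which forces $j\in\sigma\subseteq c\cap\tau$ — so $(c\cap\tau)\cup\{n\}\in\C$. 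Combining the two inclusions gives $\code(\V)=\C$.

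I expect the main obstacle to be the codeword characterization and the accompanying bookkeeping: $\sigma$ must record exactly the inclusions forced on the new set so that adjoining $n$ to an inclusion-closed clique containing $\sigma$ stays inclusion-closed, while $\tau$ must record exactly the disjointnesses so that replacing $c$ by $c\cap\tau$ never breaks the clique condition; the step ``$(n,j)\in\CF(\C)$ forces $j\in\sigma\subseteq c$'' is precisely why the hypothesis $\sigma\subseteq c$ appears. One minor loose end: when $\sigma=\emptyset$ the set $V_n=\R^d\times[2,3]$ is unbounded, but replacing it by $B\times[2,3]$ for a large box $B\supseteq\bigcup_i U_i$ leaves $\code(\V)$ unchanged, and in particular $\V$ consists of axis-parallel boxes whenever $\U$ does, which is what \Cref{thm:degreetwo} requires.
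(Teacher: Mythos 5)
Your proof is correct, and it takes essentially the same route as the paper: slice $\R^{d+1}$ along the new coordinate, compute that the codewords of $\V$ are exactly $\{\emptyset\}\cup(\C\setminus n)\cup\{c\cap\tau\}\cup\{(c\cap\tau)\cup\{n\}:\sigma\subseteq c\}$, and then verify membership in $\C$ by reducing to minimal degree-two relations, using \Cref{lem:deletion-is-subset} for the first family.

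The one structural difference is that you extract a reusable characterization --- a set $S$ is a codeword of a degree-two code iff no pair in $S$ is forbidden and $S$ is closed under $(i,j)$-type relations --- and then check the two remaining families against it, whereas the paper runs the two reductio arguments in place (Cases 1 and 2 of its proof). This is a clean abstraction and arguably a slight improvement in readability; the underlying reductions are the same. Two small points of hygiene worth tidying if you write this up: condition (i) in your characterization is better phrased as ``$(\{i,j\},\emptyset)$ is not an RF relation of $\C$'' rather than ``$U_i\cap U_j\neq\emptyset$,'' since you are only handed a realization of $\C\setminus n$ and $U_n$ does not literally exist (your uses of it, e.g.\ ``$U_i\cap U_n\neq\emptyset$,'' should be read as statements about RF relations); and the parenthetical claiming inclusion-minimality of $n$ forces all $U_i\neq\emptyset$ is not needed and not quite right --- the correct justification is the relabelling-away-of-degree-one-indices convention you already invoke.
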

\begin{proof}
Fix $p\in \R^{d+1}$, and consider the codeword that arises at $p$ in the realization $\V$.
If the last coordinate of $p$ lies outside the range $[0,3]$, then we simply obtain the empty codeword at $p$.
Let $q$ denote the projection of $p$ to $\R^d$, i.e. the point obtained by setting the last coordinate of $p$ to zero. 
Let $c\in \C\setminus n$ denote the codeword that arises at $q$ in the realization $\U$. 
If the last coordinate of $p$ lies in the range $[0,1]$, then the codeword arising at $p$ in the realization $\V$ is exactly $c$. 
In particular, the codewords arising for such $p$ are exactly those in $\C\setminus n$, which is a subset of $\C$ by \Cref{lem:deletion-is-subset}. 

It remains to consider the case that the last coordinate of $p$ lies in the range $(1,3]$. 
Here we must carefully consider several cases.\medskip

\noindent\textbf{Case 1:} $p\notin V_n$. \smallskip

\noindent The codeword arising at $p$ in $\V$ will be precisely $c\cap \tau$. 
Let $\gamma$ denote $c\cap \tau$ and suppose for contradiction that $\gamma$ is not a codeword of $\C$, and in particular not a codeword of $\C\setminus n$.
This means that $(\gamma, \delta)$ is an RF relation of $\C\setminus n$ for some $\delta\subseteq [n-1]\setminus \tau$. 
Since $\C\setminus n$ is degree two and $c$ is a codeword of $\C\setminus n$ this reduces to an RF relation $(i, j)$ where $i\in \gamma$ and $j\in [n-1]\setminus \tau$.
The latter condition implies that $(\{j,n\}, \emptyset)$ is an RF relation in $\C$. 
But these two relations together imply that $(\{i, n\},\emptyset)$ is an RF relation in $\C$, contradicting the fact that $i\in \tau$.\medskip 

\noindent\textbf{Case 2:} $p\in V_n$. \smallskip
Observe that the codewords arising at such $p$ in $\V$ are precisely of the form $(c\cap \tau)\cup \{n\}$ where $c$ is a codeword of $\C\setminus n$ that contains $\sigma$. 
It thus suffices to show that these are precisely the codewords of $\C$ that contain $n$. 

First suppose that $\tilde c$ is a codeword of $\C$ containing $n$.
Then $\sigma\subseteq \tilde c$ because $\sigma$ by definition records the indices in $[n-1]$ that appear in every codeword of $\C$ containing $n$, and $\tilde c\setminus \{n\} \subseteq \tau$ since every index in $\tilde c\setminus \{n\}$ appears together with $n$ in the codeword $\tilde c$. 
Setting $c = \tilde c\setminus \{n\}$, we see that $c$ is a codeword of $\C\setminus n$ containing $\sigma$, and $\tilde c = (c\cap \tau)\cup \{n\}$ as desired. 

For the converse, let $c$ be a codeword of $\C\setminus n$ that contains $\sigma$.
Let $\gamma = c\cap \tau$, and note that the argument from Case 1 shows that $\gamma$ is a codeword of $\C\setminus n$. 
Suppose for contradiction that $\gamma \cup \{n\}$ is not a codeword of $\C$.
Then $(\gamma \cup\{n\}, \delta)$ is an RF relation for $\C$, for some $\delta\subseteq [n-1]\setminus \sigma$. 
This reduces to a degree two relation, but each possibility leads to a contradiction.
A relation $(\{i,j\},\emptyset)$ where $\{i,j\}\subseteq\gamma$ is not possible since $\gamma$ is a codeword.
A relation $(\{i,n\},\emptyset)$ with $i\in\gamma$ is not possible since $\gamma\subseteq \tau$.
A relation $(i, j)$ where $i\in \gamma$ and $j\in \delta$ is not possible since $(\gamma, \delta)$ is not a relation. 
Finally, a relation $(n, i)$ where $i\in\delta$ is not possible since $\delta$ is disjoint from $\sigma$.

We have thus shown that the codewords arising inside $V_n$ in $\V$ are exactly the codewords of $\C$ that contain $n$, concluding the proof. 
\end{proof}

\begin{example}\label{ex:degree-two}
Consider the code \begin{align*}
\C &= \{\mxl{123}, \mxl{1345}, 135, 145, 134, 12, 13, 1, 4, \emptyset\}.
\end{align*}
The minimal RF relations for this code are \[
(\{2,4\},\emptyset),\,\,   (\{2,5\},\,\,   \emptyset),\,\,   (2,1),\,\,   (3,1),\,\,   (5,1),\,\,   (5,3).
\]
In particular, it is a degree two code.
Moreover, $\C\setminus 5$ has a realization by intervals in $\R^1$.
\Cref{fig:degree-two} shows the construction of \Cref{lem:extend} applied to this realization, in order to obtain a realization of $\C$ by axis-parallel boxes in $\R^2$. 
The sets $\sigma$ and $\tau$ of \Cref{lem:extend} are $\sigma = \{3\}$ and $\tau = \{1,3,4\}$ in this case. 
\begin{figure}[h]
    \[
    \includegraphics{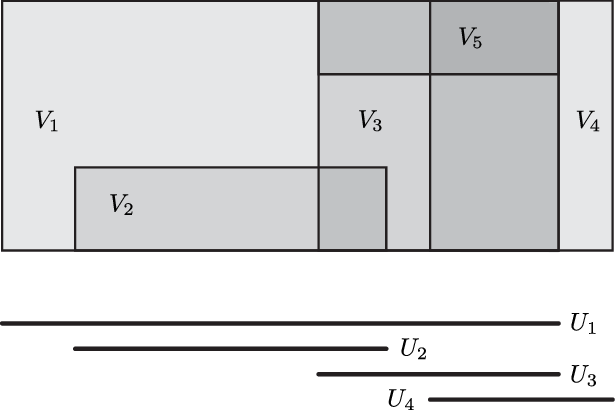}
    \]
    \caption{The construction of \Cref{lem:extend}, extending a realization of $\C\setminus 5$ in $\R^1$ (below) to a realization of $\C$ in $\R^2$ (above). }\label{fig:degree-two}
\end{figure}

\end{example}

\degreetwo*
\begin{proof}
We proceed by induction on $n$. 
The base cases $n=1$ and $n=2$ can be verified straightforwardly, since every code on one or two indices is degree two and also realizable by intervals in $\R^1$. 
For the inductive step with $n\ge 3$, fix a degree two code $\mathcal{C}\subseteq 2^{[n]}$. By \Cref{lem:delete}, $\mathcal{C}\setminus n$ is also degree two, and by inductive hypothesis there exists a realization $\{U_1,\ldots, U_{n-1}\}$ of $\mathcal{C}\setminus n$ in $\R^{n-2}$ by axis-parallel boxes.
The realization of $\C$ in $\R^{n-1}$ provided by \Cref{lem:extend} consists of products of the various $U_i$ and their intersections with intervals, and hence also consists of axis-parallel boxes. 
\end{proof}

\section{Conclusion}\label{sec:conclusion}

Several lines of investigation remain open. 
Perhaps the most pressing question is to resolve the ambiguity regarding the open embedding dimension of $\FP$.
\begin{question}
What is the precise value of $\odim(\FP)$?
\end{question}

Our study of $\FP$ was motivated by the broader question of studying 3-sparse codes.
A more general family of 3-sparse codes can be obtained from ``Steiner triple systems," which are sets of triples in $[n]$ where every pair in $n$ appears in a unique triple in the system.
A Steiner triple system on $n$ exists precisely when $n\equiv 1$ or $n\equiv 3$ modulo 6, and the maximal codewords of $\FP$ form the smallest nontrivial Steiner triple system, which is in fact the unique such system on seven indices.
Given any Steiner triple system, one can form an associated convex code by adding the singletons and the empty codeword.
Call such a code a \emph{Steiner triple code}.
Every Steiner triple code is 3-sparse and intersection complete, and so has closed embedding dimension at most five (see \cite[Theorem 1.9]{embedding-phenomena}). 
However, our \Cref{thm:fp} shows that the open embedding dimension can exceed the closed embedding dimension in a Steiner triple code.
As we have seen, a reason for this is that realizations of such codes contain many sunflowers of three sets, to which we can potentially apply \Cref{collinear}. 
We posit that as the Steiner systems in question grow, so must the open embedding dimension.
\begin{conjecture}
    For every $d\ge 1$ there exists a Steiner triple code $\C$ with $\odim(\C)\ge d$. 
\end{conjecture}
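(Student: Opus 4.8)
The plan is to argue by contradiction. Fix $d$, suppose that every Steiner triple code has open embedding dimension at most $d$, and aim for a contradiction once the number of indices $n$ (hence the number of triples $N = n(n-1)/6$) is large. So let $S$ be a Steiner triple system on $[n]$, let $\C$ be its Steiner triple code, and let $\U = \{U_1,\dots,U_n\}$ be an open convex realization of $\C$ in $\R^d$. For each triple $T\in S$ choose a point $p_T$ whose codeword is exactly $T$; then $p_T\in\bigcap_{i\in T}U_i$ and $p_T\notin U_\ell$ for $\ell\notin T$, and $\{U_i : i\in T\}$ is a sunflower, so \Cref{collinear} applies to every triple of $S$. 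The key simplification, which isolates the last step of the proof of \Cref{odim3}, is the following reduction: since $\C$ is $3$-sparse and its size-three codewords are exactly the triples of $S$, it suffices to exhibit a single point lying in three sets $U_i,U_j,U_k$ with $\{i,j,k\}\notin S$. By the defining property of a Steiner system this is the same as exhibiting a point in $U_i\cap U_j\cap U_\ell$ for \emph{some} $\ell$ other than the unique index completing $\{i,j\}$ to a triple.

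To produce such a point I would feed the points $p_T$ (or a well-chosen subset of them) into Tverberg's theorem: once $N$ exceeds $(r-1)(d+1)+1$ we obtain a partition $P_1\sqcup\cdots\sqcup P_r$ of a subset of $\{p_T\}$ whose convex hulls share a common point $q$. Say a part is \emph{monochromatic} if its triples all pass through one common index; then $q$ lies in the corresponding set. If three parts were monochromatic with colors not forming a triple we would be done. Short of that, the goal would be to arrange one monochromatic part of size two, say $\{p_T,p_{T'}\}$ with $T\cap T'=\{i\}$, so that $q$ lies on the segment $\overline{p_T p_{T'}}\subseteq U_i$; writing $T=\{i,a,b\}$ and $T'=\{i,c,e\}$ one then knows $p_T\in U_a$ and $p_{T'}\in U_c$, so if a further ingredient (another part of the partition, or a sunflower argument in the spirit of Case~2 of the proof of \Cref{odim3}) places $q$ in $U_g$, where $\{a,c,g\}$ is the triple through $\{a,c\}$, then \Cref{collinear} yields $q\in U_a\cap U_c\cap U_g$, hence $q\in U_i\cap U_a\cap U_c$ with $\{i,a,c\}\notin S$ --- a contradiction. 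A monochromatic part of size $k$ corresponds to $k$ triples forming a pencil through a point of $S$, so one wants Tverberg partitions respecting a judicious choice of pencils, and the freedom to choose which $p_T$ to feed in gives some leverage.

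The main obstacle is precisely the one the authors identify for $\FP$ in $\R^4$: Tverberg and Radon produce \emph{a} partition, not one we may steer, and the interlacing ``bad split'' of points along the moment curve shows that the natural point set can refuse to contain any useful monochromatic part. Getting around this appears to require either a constrained or colored Tverberg statement adapted to the pencil structure of Steiner systems --- I know of no off-the-shelf theorem of this kind --- or an entirely different invariant. One tempting alternative is to bound the $d$-representability of the nerve of $\U$, which is the complete graph on $[n]$ with the triples of $S$ adjoined as $2$-faces; but this complex is $d$-collapsible for every $d\ge 2$ (each edge lies in a unique triple, so the triples, and then the edges, collapse freely), so collapsibility-type obstructions cannot help, and one would need obstructions to $d$-representability that grow with $n$. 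My best guess is that a correct proof combines the abundance of $\FP$-type sunflowers inside $\C$ with a quantitative position-theoretic statement forcing many of the $p_T$ into convex position in a way that defeats the moment-curve configuration; making that precise is the crux, and is why the statement remains only a conjecture.
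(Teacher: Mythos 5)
This statement is a \emph{conjecture}, not a theorem: the paper offers no proof, and explicitly says that ``the route to establishing this conjecture is not at all straightforward'' and that ``new techniques may be needed.'' So there is nothing in the paper for your proposal to be measured against, and indeed your write-up is candid that it does not close the argument. Read as a research sketch rather than a proof, it is a reasonable and thoughtful one: the reduction to finding a single point in $U_i\cap U_j\cap U_\ell$ with $\{i,j,\ell\}\notin S$ is correct (by $3$-sparsity), the idea of running Tverberg's theorem on the points $p_T$ and then deploying \Cref{collinear} through the pencil structure of the Steiner system is in the spirit of the paper's proof of \Cref{odim3}, and your observation that the nerve of any such realization is $2$-collapsible (each edge lies in a unique triangle, so the triangles collapse freely, leaving a graph) is a genuinely useful remark showing that Leray/collapsibility-type lower bounds cannot detect the conjectured growth. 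You have also put your finger on exactly the obstruction the authors flag in their ``Studying $\FP$ in dimension four'' section: Tverberg and Radon give you \emph{a} partition, not a steerable one, and moment-curve configurations produce only interlacing splits with no monochromatic part, so the sunflower lemma never fires. To be clear about where the gap is: you have no mechanism to force a monochromatic part of size two onto the segment through the Tverberg point, and no colored or constrained Tverberg theorem adapted to pencils in a Steiner system is currently known; until such a tool exists (or a different invariant is found), the argument cannot be completed, which is consistent with the statement remaining open.
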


The route to establishing this conjecture is not at all straightforward. 
Our proof that $\FP$ is not open convex in $\R^3$ made frequent use of the Fano plane's symmetries, and also the property that any two maximal codewords share a unique index.
This poses a challenge to generalizing our methods to higher order Steiner triple codes, and new techniques may be needed. 

As regards codes of low degree, a natural next step is to investigate degree three codes.
Another interesting question is to study codes which are both sparse \emph{and} low degree. 
\begin{question}
Among convex degree three codes, what pairs of embedding dimensions can arise? 
\end{question}
\begin{question}
Can we determine  bounds on the embedding dimensions of $k$-sparse, degree $\ell$ codes, in terms of $k$ and $\ell$? 
\end{question}

\bibliographystyle{plain} 
\bibliography{main.bib}

\end{document}